\newtheorem{thm}{Theorem}[section]
\newtheorem*{thm*}{Theorem}
\newtheorem{prop}[thm]{Proposition}
\newtheorem{lem}[thm]{Lemma}
\newtheorem{cor}[thm]{Corollary}
\theoremstyle{definition}
\newtheorem{definition}[thm]{Definition}
\newtheorem{example}[thm]{Example}
\newtheorem*{ex*}{Example}
\theoremstyle{remark}
\newtheorem{remark}[thm]{Remark}
\newtheorem*{remark*}{Remark}
\DeclareMathOperator\vol{vol}
\DeclareMathOperator\SO{SO}
\DeclareMathOperator\U{U}
\DeclareMathOperator\Hess{Hess}
\DeclareMathOperator\ver{ver}
\DeclareMathOperator\hor{hor}
\DeclareMathOperator\Ric{Ric}
\DeclareMathOperator\Vol{Vol}
\def\div{\mbox{div}}
\def\End{\mbox{End}}
\def\i{\mathrm{i}}
\def\w{\wedge}
\DeclareMathOperator\tr{tr}
\def\C{\mathbb{C}}
\def\R{\mathbb{R}}
\def\Z{\mathbb{Z}}
\def\V{\mathcal{V}}
\begin{document}

\title{Variation formulae for the volume\\of coassociative submanifolds}

\author{Tommaso Pacini and Alberto Raffero}
\affil{Department of Mathematics, University of Torino \\ via Carlo Alberto 10, 10123 Torino, Italy \\ tommaso.pacini@unito.it, alberto.raffero@unito.it}

\maketitle

\begin{abstract}
We prove new variation formulae for the volume of coassociative submanifolds, expressed in terms of $G_2$ data. 
These formulae highlight the role of the ambient torsion and Ricci curvature. 
As a special case, we obtain a second variation formula for variations within the moduli space of coassociative submanifolds.  
These results apply, for example, to coassociative fibrations. 
We illustrate our formulae with several examples, both homogeneous and non.
\end{abstract}

\section{Introduction}\label{s:intro}
This paper concerns the calculus of variations of the volume functional for submanifolds, in a specific geometric context. 
It is based upon two guiding principles.
\paragraph{Variation formulae in calibrated geometry.} 
Recall, in the general setting of Riemannian geometry, the first variation formula for volume (notation explained below):
$$\frac{d}{dt}\Vol(\Sigma_t)_{|t=0}=-\int_\Sigma H\cdot Z^\perp\vol_0.$$
It is an interesting question whether this formula, and/or the second variation formula, 
have alternative expressions in the more restrictive context of calibrated geometry \cite{HL}.

Calibrated geometry is defined by a global differential form $\alpha$ such that $|\alpha|\leq 1$. We then say that a submanifold $\Sigma$ is calibrated if 
$|\alpha_{|\Sigma}|=1$; equivalently, $\alpha_{|\Sigma}=\vol_{\Sigma}$. In this context there are two fundamental facts that should be taken into account:
\begin{enumerate}[$\bullet$]
\item $\alpha$ provides an extension of the volume form of a calibrated submanifold to a global form on the ambient space. 
One might expect that, in this context, an alternative first variation formula will incorporate ambient data related to the calibration, 
rather than data defined only along the submanifold by the Riemannian structure (such as $H$). 
\item If one adds the condition $d\alpha=0$ then any calibrated submanifold is automatically area-minimizing, 
thus in particular minimal. Any variation formula should reproduce this fact. 
\end{enumerate}
Analogous considerations hold for the second variation formula. In particular, the area-minimizing property implies stability, i.e., 
the second variation is non-negative. 
In this sense, in the closed case $d\alpha=0$ most of the information typically contained in the variation formulae is already known by other means. 

We shall be interested in the more general scenario, in which closedness is not required. 

\paragraph{$\mathbf{G_2}$ vs.~K\"ahler geometry.} 
More specifically, the goal of this paper is to show how the above plays out in the context of $G_2$ geometry. 
Here, one deals with 7-dimensional manifolds endowed with a certain non-degenerate 3-form $\phi$, 
which we shall always assume to be closed: $d\phi=0$. 
From some points of view, this set-up strongly resembles the case of $2n$-dimensional K\"ahler manifolds, which are endowed with a non-degenerate closed 2-form $\omega$. 
Indeed, the formal analogies between these two geometries have classically been one of the guidelines in the development of $G_2$ geometry. 
Concerning submanifolds $\Sigma$, the analogy continues on the $G_2$ side with the class of coassociative submanifolds, 
defined by the condition $\phi_{|\Sigma}=0$, on the K\"ahler side with the class of Lagrangian submanifolds, 
defined by the condition $\omega_{|\Sigma}=0$. 

The link to the first point, above, is the fact that coassociative submanifolds can alternatively be described as calibrated by $\psi\coloneqq\star\phi$, 
the Hodge dual 4-form. 

Putting all this together, the main goal of this paper is to investigate the first and second variation formulae for coassociative submanifolds, 
in the general scenario in which $\psi$ is not closed. In deference to the above analogy, 
along the way we shall compare our results to a second variation formula obtained by Oh \cite{Oh} for Lagrangian submanifolds. 

The analogy between $G_2$ and K\"ahler geometry breaks down however on two vital, inter-related, issues. 
\begin{enumerate}[1.]
\item In the $G_2$ case all other structures (metric, orientation, etc.) are generated by $\phi$ itself. 
This is in stark contrast with the K\"ahler condition, which can be defined as a compatibility condition between three a priori independent structures 
$(J,g,\omega)$: the remaining degrees of freedom, i.e., the possibility of ``playing one structure against the other", 
provides a useful handle in many directions of research in K\"ahler geometry.  
\item K\"ahler manifolds are automatically torsion-free, in the sense that the Levi-Civita connection defined using the $\SO(2n)$-frame bundle restricts 
to the $\U(n)$-frame bundle. $G_2$ manifolds are torsion-free in the analogous sense if and only if a second condition is imposed: $d\psi=0$. 
We have already mentioned that this condition has strong implications for coassociative submanifolds.  
More generally, it has very strong implications for the whole of $G_2$ geometry, 
suddenly pushing it much closer to the geometry of Calabi-Yau manifolds: a very small part of the wide world of K\"ahler geometry. 
For both reasons, we will avoid assuming $d\psi=0$.
\end{enumerate}

Our main result is a new second variation formula, specific to coassociative submanifolds: Theorem \ref{thm:secvar}. 
One of the typical goals of such a formula is to allow us to detect geometric conditions ensuring that a minimal submanifold is also stable. 
This is achieved in Corollary \ref{cor:stability}.

Along the way, in order to facilitate comparisons, we discuss both the standard and the Lagrangian variation formulae. 
In this regard we wish to emphasize the following points:
\begin{enumerate}[1.]
\item The standard second variation formula contains a non-geometric jumble of curvature terms. 
One of the main merits of Oh's Lagrangian formula is to show how, in the K\"ahler context, these terms can be rearranged into the Ricci curvature. 
Interestingly, our formula for coassociative submanifolds also contains a Ricci term.
\item As perhaps should be expected, torsion forces itself into the second variation formula for coassociative submanifolds leading to a new challenge: 
in order to ensure stability of the submanifold, it is also necessary to control the sign of the torsion term. 
In Section \ref{secTorsion} we show how this can be done in the context of $G_2$ manifolds satisfying a natural ``quadratic condition", 
introduced by Bryant \cite{Bryant}. This is the key ingredient to Corollary \ref{cor:stability}.  
\end{enumerate}

The last three sections present several classes of examples, which also serve as a useful testing ground for our results. 
Section \ref{s:erp} shows how our formulae play out in the context of Bryant's ``Extremally Ricci-Pinched" $G_2$ manifolds. 
Here, we study two concrete ambient manifolds due to Bryant and to Lauret, and several families of coassociative submanifolds, 
some of which are new in the literature. Section \ref{s:example} provides a reformulation of our formulae in the context of coassociative fibrations 
defined by a Riemannian submersion. 
This is applied to a classical example due to Fern\'andez \cite{Fernandez}. Section \ref{s:perturbations} presents a method to build new, 
non-homogeneous, examples by perturbing the Fern\'andez closed $G_2$ structure.

\ 

\noindent\textbf{Acknowledgements.} The first variation formula for coassociative submanifolds presented below may be already known. 
Jason Lotay and TP came across it, in joint unpublished work, many years ago. 
Second variation formulae for coassociative submanifolds in the case $d\psi=0$ have been obtained in \cite{McLean} and \cite{LeVanzura}.
The authors were supported by the project PRIN 2022  
``Real and Complex Manifolds: Geometry and Holomorphic Dynamics'' and by GNSAGA of INdAM. 
The authors wish to thank Anna Fino, Luciano Mari and Fabio Podest\`a for interesting conversations.

\section{First variation formulae}\label{s:first}
Let us start by reviewing the classical formula.

\paragraph{The classical setting.}
Let $\Sigma^k$ be a compact oriented manifold. Let $(M^n,g)$ be a Riemannian manifold. 
Let $\iota_t:\Sigma\hookrightarrow M$ be a curve of immersions. The induced metric defines volume forms $\vol_t$ on the image submanifolds 
$\Sigma_t$. We may pull them back to $\Sigma$, obtaining the curve of volume forms $\iota_t^*\vol_t$. 
This allows us to compute the volume of the image submanifolds using the fixed manifold $\Sigma$: $\Vol(\Sigma_t)=\int_\Sigma \iota_t^*\vol_t$.

We are interested in understanding how the volume changes with $t$. As a first step, let us investigate the variation of the volume forms. 

Since the bundle of volume forms is trivial there exists a family of functions $f(t):\Sigma\rightarrow\R$ such that
$$\iota_t^*\vol_t=f(t)\,\iota_0^*\vol_0.$$ 
The variation of $\iota_t^*\vol_t$ corresponds to the $t$-derivative of $f(t)$.

Concerning notation, set $Z\coloneqq\frac{d}{dt}\iota_t$. With respect to the submanifolds, 
we can write $Z$ as a sum of tangential and normal components: $Z=Z^\top+Z^\perp$. 
Let $H=\tr_\Sigma(\nabla^\perp)$ denote the mean curvature vector field of $\Sigma_0$. 

\begin{lem}\label{l:firstvar_classical}
$\frac{d}{dt}f(t)_{|t=0}=\div_{\Sigma}(Z^\top)-H\cdot Z^\perp$.
\end{lem}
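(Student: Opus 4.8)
The plan is to compute the $t$-derivative of the pullback volume form $\iota_t^*\vol_t$ directly, working with a local orthonormal frame and tracking how the induced metric changes under the variation. First I would express $\iota_t^*\vol_t$ in terms of the induced metric $g_t := \iota_t^* g$ on the fixed manifold $\Sigma$. Writing $g_t$ in local coordinates as $(g_t)_{ij}$, the volume form is $\sqrt{\det(g_t)_{ij}}\,dx^1\w\cdots\w dx^k$, so that $f(t)=\sqrt{\det(g_t)_{ij}}/\sqrt{\det(g_0)_{ij}}$. The standard identity $\frac{d}{dt}\log\sqrt{\det(g_t)}=\tfrac12 g_t^{ij}\frac{d}{dt}(g_t)_{ij}$ reduces everything to computing $\frac{d}{dt}(g_t)_{ij}|_{t=0}$, i.e.\ the first variation of the induced metric.

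Next I would identify that variation with a Lie-derivative-type expression. Since $(g_t)_{ij}=g(\iota_{t*}\partial_i,\iota_{t*}\partial_j)$ and the variation field is $Z=\frac{d}{dt}\iota_t$, differentiating gives $\frac{d}{dt}(g_t)_{ij}=g(\nabla_{\partial_i}Z,\partial_j)+g(\partial_i,\nabla_{\partial_j}Z)$, using that the Levi-Civita connection is compatible with $g$ and that mixed partials of the variation commute. Tracing with $g^{ij}$ at $t=0$, the tangential part of $Z$ contributes the divergence $\div_\Sigma(Z^T)$ (this is precisely the definition of divergence along $\Sigma$), while the normal part $Z^\perp$ contributes through the second fundamental form. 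The key point here is that $\tfrac12 g^{ij}\big(g(\nabla_{\partial_i}Z,\partial_j)+g(\partial_i,\nabla_{\partial_j}Z)\big)$ splits cleanly into a purely tangential divergence term and a normal term governed by the shape operator.

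To extract the normal contribution I would compute $g^{ij}g(\nabla_{\partial_i}Z^\perp,\partial_j)$. Since $Z^\perp$ is normal, $g(Z^\perp,\partial_j)=0$, so differentiating yields $g(\nabla_{\partial_i}Z^\perp,\partial_j)=-g(Z^\perp,\nabla_{\partial_i}\partial_j)=-g(Z^\perp,\mathrm{II}(\partial_i,\partial_j))$, where $\mathrm{II}$ is the second fundamental form. Tracing with $g^{ij}$ produces $-g(Z^\perp,H)$ because $H=\tr_\Sigma(\nabla^\perp)$ is exactly the metric trace of the second fundamental form. Assembling the pieces gives $\frac{d}{dt}f(t)|_{t=0}=\div_\Sigma(Z^T)-H\cdot Z^\perp$, as claimed.

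The main obstacle, though largely bookkeeping, is the careful separation of the tangential and normal contributions and verifying that the tangential part assembles into a genuine divergence on $\Sigma$ (which requires recognizing $g^{ij}g(\nabla_{\partial_i}Z^T,\partial_j)$ as $\div_\Sigma(Z^T)$ modulo the terms that the normal component generates). One must be slightly careful that $Z^T$ is a vector field along the immersion rather than an intrinsic vector field, so the relevant divergence is with respect to the induced connection; checking that the cross terms (tangential derivatives hitting the normal component and vice versa) are correctly accounted for is where sign and trace conventions most easily go astray.
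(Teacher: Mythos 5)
Your proposal is correct and is essentially the paper's own argument in a different but equivalent guise: since $|\iota_{t*}v_1\wedge\cdots\wedge\iota_{t*}v_k|^2=\det(g_t)_{ij}$, differentiating $\sqrt{\det g_t}$ via the log-determinant identity is the same computation as the paper's differentiation of the wedge-product norm, and both then use the symmetry of the connection to replace $\tfrac{d}{dt}$ of the frame by $\nabla Z$ before splitting into the tangential divergence and the normal second-fundamental-form trace. No gap; the bookkeeping you flag (cross terms and the induced connection) is handled correctly because the pairing against $\partial_j$ kills the normal component of $\nabla_{\partial_i}Z^T$ and the tangential derivative of $Z^\perp$ reduces to $-g(Z^\perp,\mathrm{II}(\partial_i,\partial_j))$ exactly as you state.
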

\begin{proof}
Since both sides are well defined independently of coordinate systems, to prove the statement at any point $p\in\Sigma$ 
we can choose normal coordinates with respect to the metric at time $t=0$. This provides a local basis $v_1,\dots,v_k$ which is orthonormal at $p$. 
The equality $\iota_t^*\vol_t=f(t)\iota_0^*\vol_0$ then implies that, at $p$, $f(t)=|\iota_{t*}v_1\wedge\dots\wedge\iota_{t*}v_k|.$

Notice that
\begin{align*}
\frac{d}{dt}f(t)_{|t=0}&=\left.\frac12\frac{2(\nabla_Z(\iota_{t*}v_1\wedge\dots\wedge\iota_{t*}v_k),\iota_{t*}v_1\wedge\dots\wedge\iota_{t*}v_k)}{|\iota_{t*}v_1\wedge\dots\wedge\iota_{t*}v_k|}\right|_{t=0}\\
&=\sum_{i=1}^k
(\iota_{t*}v_1\wedge\dots\wedge\nabla_Z(\iota_{t*}v_i)\wedge\dots\wedge\iota_{t*}v_k,\iota_{t*}v_1\wedge\dots\wedge\iota_{t*}v_k)_{|t=0}.
\end{align*}
By construction $[Z,\iota_{t*}v_i]=0$ so $\nabla_Z(\iota_{t*}v_i)=\nabla_{\iota_{t*}v_i}Z$. Substituting $t=0$ we find
\begin{equation*}
\frac{d}{dt}f(t)_{|t=0}=\sum_{i=1}^k(v_1\wedge\dots\wedge\nabla_{v_i}Z\wedge\dots\wedge v_k,v_1\wedge\dots\wedge v_k).
\end{equation*}
Clearly, the only relevant component of $\nabla_{v_i}Z$ is $(\nabla_{v_i}Z\cdot v_i)v_i$. 
Separating the tangential and perpendicular components and using orthogonality we find
\begin{align*}
\frac{d}{dt}f(t)_{|t=0}&=(\nabla_{v_i}Z^\top\cdot v_i)|v_1\wedge\dots\wedge v_k|^2+(\nabla_{v_i}Z^\perp\cdot v_i)|v_1\wedge\dots\wedge v_k|^2\\
&=\div_\Sigma(Z^\top)-H\cdot Z^\perp.
\end{align*}
\end{proof}
\begin{cor}
$\frac{d}{dt}\Vol(\Sigma_t)_{|t=0}=-\int_\Sigma H\cdot Z^\perp\iota_0^*vol_0$.
\end{cor}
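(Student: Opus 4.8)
The plan is to integrate the pointwise identity of the preceding Lemma over the compact manifold $\Sigma$ and then apply the divergence theorem. First I would start from the expression $\Vol(\Sigma_t)=\int_\Sigma \iota_t^*\vol_t=\int_\Sigma f(t)\,\iota_0^*\vol_0$, in which the reference volume form $\iota_0^*\vol_0$ is fixed and all the $t$-dependence has been isolated into the scalar functions $f(t)$. Since the variation is smooth and $\Sigma$ is compact, differentiation under the integral sign is legitimate, so
$$\frac{d}{dt}\Vol(\Sigma_t)_{|t=0}=\int_\Sigma \frac{d}{dt}f(t)_{|t=0}\,\iota_0^*\vol_0.$$
Substituting the Lemma immediately gives
$$\frac{d}{dt}\Vol(\Sigma_t)_{|t=0}=\int_\Sigma\left(\div_\Sigma(Z^T)-H\cdot Z^\perp\right)\iota_0^*\vol_0.$$

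The key step is then to discard the tangential contribution. Here $Z^T$ is regarded, via $\iota_0$, as a genuine vector field on $\Sigma$, and $\div_\Sigma$ is its divergence with respect to the induced metric at $t=0$, whose Riemannian volume form is exactly $\iota_0^*\vol_0$. Because $\Sigma$ is compact and oriented without boundary, the divergence theorem yields $\int_\Sigma \div_\Sigma(Z^T)\,\iota_0^*\vol_0=0$. What survives is precisely the normal term, producing the claimed formula $\frac{d}{dt}\Vol(\Sigma_t)_{|t=0}=-\int_\Sigma H\cdot Z^\perp\,\iota_0^*\vol_0$.

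I do not expect a serious obstacle, since the heavy differential-geometric computation has already been carried out in the Lemma. The only inputs are analytic rather than geometric: the interchange of derivative and integral, guaranteed by smoothness and compactness, and the vanishing of the integral of an intrinsic divergence on a closed manifold. The one point worth stating carefully is the identification that makes $\div_\Sigma(Z^T)$ the divergence of an honest tangent field paired against the correct volume form, so that Stokes' theorem applies verbatim; once this is in place the conclusion is immediate.
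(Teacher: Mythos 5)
Your proof is correct and is exactly the intended argument: the paper states this corollary without proof precisely because it follows by integrating the Lemma and killing the tangential term via the divergence theorem on the closed manifold $\Sigma$. No gaps.
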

In other words, $-H$ is the $L^2$-gradient of $\Vol$, defined on the infinite-dimensional space of submanifolds in $M$.

\paragraph{$\mathbf{G_2}$ manifolds and coassociative submanifolds.}
Let $M^7$ be a $7$-manifold endowed with a $G_2$ structure $\phi$ (not necessarily closed or co-closed).  
Let $\psi=\star\phi$, where $\star$ denotes the Hodge operator determined by the metric $g_\phi$ and the orientation induced by $\phi$. 
We shall follow the conventions in Bryant \cite{Bryant}, so the pointwise models on $\R^7$ for $\phi$ and $\psi$ are
\begin{align*}
\phi&=123+1(45+67)+2(46-57)-3(47+56),\\
\psi&=4567+23(45+67)-13(46-57)-12(47+56),
\end{align*}
where 123 (or, at times, $e^{123}$) is short-hand for $e^1\wedge e^2\wedge e^3$, etc. 
Notice, in both cases, the appearance of self-dual forms with respect to 4,5,6,7. 
Moreover, we shall use Bryant's compact notation 
\begin{equation*}
\phi	=	\frac{1}{6}\sum_{i,j,k}\epsilon_{ijk}\,e^i\wedge e^j\wedge e^k=\sum_{i<j<k}\epsilon_{ijk}\,e^i\wedge e^j\wedge e^k
	=	\sum_{i,j,k}\epsilon_{ijk}\,e^i\otimes e^j\otimes e^k, 
\end{equation*}
where the $\epsilon_{ijk}\in\{\pm 1,0\}$ are anti-symmetric with respect to the indices and are chosen so as to reproduce the initial expression for $\phi$. 

\smallskip

Both $\phi$ and $\psi$ satisfy the calibration inequality:
\begin{enumerate}[$\bullet$]
\item for any oriented 3-plane $\pi$ in $TM$, $|\phi_{|\pi}|\leq\vol_\pi$. The 3-plane is called associative if equality holds; 
\item for any oriented 4-plane $\pi$ in $TM$, $|\psi_{|\pi}|\leq\vol_\pi$. The 4-plane is called coassociative if equality holds. 
\end{enumerate}
It is important to stress the fact that associative and coassociative planes have a canonical orientation: that for which $\vol_\pi=\phi_{|\pi}$. 
We shall always consider them with this orientation.

A 3-plane is associative if and only if its normal 4-plane is coassociative. 
The group $G_2$ acts transitively on the space of associative 3-planes, thus also on the space of coassociative 4-planes 
(\cite[Theorem 1.8, p.~114]{HL}).  
We may thus always assume that, in the pointwise model, a coassociative plane $\pi$ is spanned by $e_4,e_5,e_6,e_7$ and that its normal space is 
spanned by $e_1,e_2,e_3$. Furthermore, the isotropy subgroup of $\pi$ acts like $\SO(3)$ on the normal space (\cite[eq.~(1.9) p.~115]{HL}). 
We may therefore also assume that any given normal vector $Z$ coincides with $e_1$. This often facilitates studying the behaviour of certain tensors and 
contractions in terms of the pointwise models of $\psi$ and $\phi$. In particular, the operation $Z\mapsto(Z\lrcorner\phi)_{|\pi}$ produces an isomorphism 
between the normal (associative) space $\pi^\perp$ and the space of self-dual 2-forms $\Lambda^2_+$ on $\pi$.

Concerning the above inequalities, Harvey-Lawson \cite{HL} provide explicit characterizations of the difference. Regarding $\psi$, they show: $\forall v_1,\dots,v_4\in T_pM$, 
\begin{equation*}(\psi(v_1,\dots,v_4))^2+|\mathcal{C}(v_1,\dots,v_4)|^2=|v_1\wedge\dots\wedge v_4|^2, 
\end{equation*}
where $\mathcal{C}\in\Gamma(T^*M^{\otimes4}\otimes TM)$ is the coassociator tensor, depending linearly on $\phi$.

Let $\Sigma$ be a 4-dimensional compact oriented manifold. We will say that an immersion $\iota:\Sigma\hookrightarrow M$ is coassociative if $\iota^*\psi=\iota^*\vol$; equivalently, up to orientation, $\iota^*\phi=0$. We will often identify $\Sigma$ with its image. 

As mentioned in the Introduction, the fundamental lemma of calibrated geometry shows that, if $d\psi=0$, then any coassociative submanifold is automatically volume-minimizing in its homology class. In particular it is minimal, thus $H=0$, and stable. We shall be interested in understanding what happens in the alternative setting $d\phi=0$.

Recall, cf.~e.g.~\cite{G2lectures}, that there exists a global endomorphism $T$ on $M$ such that $\nabla_Z\phi=T(Z)\lrcorner\psi$. 
Recall also the decomposition of the bundle of $p$-forms on $M$ into $G_2$-irreducible subspaces $\Lambda^p_k$, where $k$ denotes the dimension of the subspace. In particular,
\begin{equation*}
\Lambda^1=\Lambda^1_7,\ \ \Lambda^2=\Lambda^2_7\oplus\Lambda^2_{14}, \ \ \Lambda^3=\Lambda^3_1\oplus\Lambda^3_7\oplus\Lambda^3_{27}.
\end{equation*}
Up to the standard musical isomorphisms, one can apply this decomposition to $T$. It turns out that, when $\phi$ is closed, $T(Z)=\tau_2(Z,\cdot)^\sharp$, where $\tau_2\in\Lambda^2_{14}$. One also finds $d\psi=\tau_2\wedge\phi = -\star \tau_2$. 

The tensor $T$ (or $\tau_2$) is known as the torsion of $\phi$: it is apparent from the above formulae that it appears naturally in calculations. In particular, one can prove that $T=0$ if and only if $\psi$ is also closed; 
equivalently, if and only if $\phi$ and $\psi$ are parallel with respect to the Levi-Civita connection.

\paragraph{The first variation formula for coassociatives.}
Let $\iota_t:\Sigma\hookrightarrow M$ be a curve of immersions as above, such that $\iota_0$ is coassociative.

\begin{lem}\label{l:firstvar_G2}
Assume $d\phi=0$. Then
$$\frac{d}{dt}f(t)_{|t=0}\,\iota_0^*\vol_0=(d\iota_{Z^\top}\psi+\tau_2\wedge(\iota_{Z^\perp}\phi))_{|\Sigma}.$$
\end{lem}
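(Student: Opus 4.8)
The plan is to compare the Riemannian volume form $\iota_t^*\vol_t$ with the pullback $\iota_t^*\psi$ of the fixed calibration, and to compute the variation of the latter via Cartan's formula. Since $\Sigma$ is $4$-dimensional both are top-degree forms, so alongside the function $f(t)$ with $\iota_t^*\vol_t=f(t)\,\iota_0^*\vol_0$ I would introduce $g(t)$ on $\Sigma$ with $\iota_t^*\psi=g(t)\,\iota_0^*\vol_0$; note $f(0)=g(0)=1$ since $\iota_0$ is coassociative. The whole computation then rests on the identity $\frac{d}{dt}f(t)_{|t=0}=\frac{d}{dt}g(t)_{|t=0}$, after which the problem reduces to differentiating the pullback of a fixed form.

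Establishing this identity is, I expect, the main obstacle, since it is exactly where coassociativity enters at first order. Fixing $p\in\Sigma$ and a basis $v_1,\dots,v_4$ of $T_p\Sigma$ that is oriented and orthonormal for the $t=0$ metric (as in the previous lemma), at $p$ we have $f(t)=|\iota_{t*}v_1\wedge\dots\wedge\iota_{t*}v_4|$ and $g(t)=\psi(\iota_{t*}v_1,\dots,\iota_{t*}v_4)$. Applying the Harvey--Lawson identity to the vectors $\iota_{t*}v_i$ gives
$$g(t)^2+|\mathcal{C}(\iota_{t*}v_1,\dots,\iota_{t*}v_4)|^2=f(t)^2.$$
Since $\Sigma_0$ is coassociative the coassociator $\mathcal{C}$ vanishes at $t=0$, so the $t$-derivative of $|\mathcal{C}|^2$ vanishes there too; differentiating at $t=0$ and using $f(0)=g(0)=1$ yields $f'(0)=g'(0)$. (Equivalently, the calibration inequality gives $g\le f$ with equality at $t=0$, so $f-g$ attains a minimum there and has vanishing derivative.)

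It then remains to compute $\frac{d}{dt}\iota_t^*\psi_{|t=0}$. By Cartan's formula for the variation of the pullback of a fixed form,
$$\frac{d}{dt}\iota_t^*\psi_{|t=0}=\iota_0^*\left(d\iota_Z\psi+\iota_Z\,d\psi\right).$$
For the second term I would substitute $d\psi=\tau_2\wedge\phi$ and expand by the Leibniz rule, $\iota_Z(\tau_2\wedge\phi)=(\iota_Z\tau_2)\wedge\phi+\tau_2\wedge\iota_Z\phi$; pulling back by $\iota_0$ and using $\iota_0^*\phi=0$ kills the first summand, while splitting $Z=Z^T+Z^\perp$ and using $\iota_0^*(\iota_{Z^T}\phi)=\iota_{Z^T}(\iota_0^*\phi)=0$ (valid since $Z^T$ is tangent) leaves exactly $\iota_0^*(\tau_2\wedge\iota_{Z^\perp}\phi)$. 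For the first term I would again split $Z=Z^T+Z^\perp$; as $d$ commutes with pullback it suffices to show $\iota_0^*(\iota_{Z^\perp}\psi)=0$, which follows from the pointwise model: with $\pi=\langle e_4,\dots,e_7\rangle$ tangent and $Z^\perp\in\langle e_1,e_2,e_3\rangle$, every term of $\psi$ containing a normal index contains at least two of them, so $\iota_{Z^\perp}\psi$ has no purely tangential $3$-form part and restricts to zero on $\Sigma$.

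Combining the three displays gives
$$\frac{d}{dt}f(t)_{|t=0}\,\iota_0^*\vol_0=\left(d\iota_{Z^T}\psi+\tau_2\wedge(\iota_{Z^\perp}\phi)\right)_{|\Sigma},$$
as claimed. The only genuinely delicate point is the first-order coincidence $f'(0)=g'(0)$, which encodes the rigidity of coassociativity; the remainder is bookkeeping with Cartan's formula, the structure equation $d\psi=\tau_2\wedge\phi$, and the $G_2$ pointwise model.
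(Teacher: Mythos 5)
Your proposal is correct and follows essentially the same route as the paper: the Harvey--Lawson identity plus the vanishing of the coassociator at $t=0$ reduces the variation of $f$ to the variation of $\iota_t^*\psi$, which is then handled by Cartan's formula, the structure equation $d\psi=\tau_2\wedge\phi$, and the pointwise observations $\iota_0^*\phi=0$, $\iota_{Z^\perp}\psi_{|\Sigma}=0$, $\iota_{Z^T}\phi_{|\Sigma}=0$. The only cosmetic difference is that you package the first-order coincidence as $f'(0)=g'(0)$ for an auxiliary function $g$, where the paper differentiates $f=\sqrt{\psi^2+|\mathcal{C}|^2}$ directly.
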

\begin{proof}
Using normal coordinates, identifications and notation as above, 
\begin{align*}
\frac{d}{dt}f(t)_{|t=0}&=\frac{d}{dt}\sqrt{\psi(\iota_{t*}v_1,\dots,\iota_{t*}v_4)^2+|\mathcal{C}(\iota_{t*}v_1,\dots,\iota_{t*}v_4)|^2}_{|t=0}\\
&=(1/2)\frac{2\psi\dot\psi+2(\mathcal{C}\cdot\mathcal{C}_Z)}{\sqrt{\psi^2+|\mathcal{C}|^2}}_{|t=0}\\
&=\dot\psi_{|t=0},
\end{align*}
because coassociativity implies $\psi(v_1,\dots,v_4)=1$, $\mathcal{C}(v_1,\dots,v_4)=0$. 
We remark that, above, $\mathcal{C}_Z$ is shorthand for $\nabla_Z(\mathcal{C}(\iota_{t^*}v_1,\dots,\iota_{t^*}v_4))_{|t=0}$.

We now write
\begin{align*}
\dot\psi_{|t=0}&=\frac{d}{dt}(\iota_t^*\psi)_{|t=0}(v_1,\dots,v_4)\\
&=(\mathcal{L}_Z\psi)(v_1,\dots,v_4)\\
&=(d\iota_Z\psi+\iota_Z(\tau_2\wedge\phi))(v_1,\dots,v_4)\\
&=(d\iota_Z\psi+\tau_2\wedge(\iota_Z\phi))(v_1,\dots,v_4),
\end{align*}
because coassociativity implies $\phi_{|\Sigma}=0$. To conclude, coassociativity and the pointwise model show that $\iota_{Z^\perp}\psi\equiv 0$ and $\iota_{Z^\top}\phi\equiv 0$ along $\Sigma$.
\end{proof}
Comparing the formulae in Lemma \ref{l:firstvar_classical} and Lemma \ref{l:firstvar_G2} first for vector fields tangent to $\Sigma$, i.e., $Z=Z^\top$, 
then for normal vector fields, shows that
\begin{align*}
\div_\Sigma(Z^\top)\,\iota_0^*\vol_0&=d\iota_{Z^\top}\psi_{|\Sigma},\\
-(H\cdot Z^\perp)\,\iota_0^*\vol_0&=\tau_2\wedge(\iota_{Z^\perp}\phi)_{|\Sigma}=\tau_2\wedge\star(\iota_{Z^\perp}\phi)_{|\Sigma}=(\tau_2^+\cdot\iota_{Z^\perp}\phi_{|\Sigma})\,\iota_0^*\vol_0. 
\end{align*}
It follows that these equalities hold for any vector field $Z$. 

The further identity $-H\cdot Z^\perp=-\tfrac12\iota_H\phi\cdot\iota_{Z^\perp}\phi_{|\Sigma}$ allows us to identify $-H$ with the self-dual component of the restriction of $\tau_2$. Specifically,
\[
-\iota_{H}\phi_{|\Sigma} = 2\,\tau_{2|\Sigma}^+. 
\] 

In particular, $H=0$ if and only if $\tau_{2|\Sigma}\in\Lambda^2_-(\Sigma)$. 

\begin{cor}Assume $d\phi=0$. Then
$$\frac{d}{dt}\Vol(\Sigma_t)_{|t=0}=\int_{\Sigma}\tau_{2|\Sigma}^+\wedge \iota_{Z^\perp}\phi.$$
\end{cor}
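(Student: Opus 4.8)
The plan is to integrate the identity from the preceding Lemma over $\Sigma$ and then treat the two resulting terms separately. By definition $\Vol(\Sigma_t)=\int_\Sigma\iota_t^*\vol_t=\int_\Sigma f(t)\,\iota_0^*\vol_0$, so differentiating under the integral sign and inserting the Lemma gives
$$\frac{d}{dt}\Vol(\Sigma_t)_{|t=0}=\int_\Sigma(d\iota_{Z^T}\psi)_{|\Sigma}+\int_\Sigma(\tau_2\wedge\iota_{Z^\perp}\phi)_{|\Sigma}.$$
It then remains to show that the first integral vanishes and that the second reduces to the self-dual part of $\tau_2$.

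For the first term I would use that restriction to $\Sigma$ is pullback along $\iota_0$, which commutes with the exterior derivative; hence $(d\iota_{Z^T}\psi)_{|\Sigma}=d\bigl((\iota_{Z^T}\psi)_{|\Sigma}\bigr)$ is an exact $4$-form on $\Sigma$. Since $\Sigma$ is compact, oriented and without boundary, Stokes' theorem makes this integral vanish. Geometrically this is the expected fact that the tangential component $Z^T$ contributes nothing: it corresponds to a reparametrization of $\Sigma$, which cannot change its volume.

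For the second term, since pullback is a ring homomorphism we have $(\tau_2\wedge\iota_{Z^\perp}\phi)_{|\Sigma}=\tau_{2|\Sigma}\wedge(\iota_{Z^\perp}\phi)_{|\Sigma}$. The key structural input is the fact, recorded just before the statement, that $(\iota_{Z^\perp}\phi)_{|\Sigma}\in\Lambda^2_+(\Sigma)$. Decomposing $\tau_{2|\Sigma}=\tau_{2|\Sigma}^++\tau_{2|\Sigma}^-$ and using that on an oriented $4$-manifold one has $\alpha\wedge\beta=\langle\alpha,\star\beta\rangle\,\vol$ for $2$-forms $\alpha,\beta$, the anti-self-dual contribution is $\tau_{2|\Sigma}^-\wedge\iota_{Z^\perp}\phi=\langle\tau_{2|\Sigma}^-,\iota_{Z^\perp}\phi\rangle\,\vol=0$, because self-dual and anti-self-dual forms are pointwise orthogonal. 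Thus only $\tau_{2|\Sigma}^+\wedge\iota_{Z^\perp}\phi$ survives, which is precisely the claimed integrand.

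I do not expect a genuine obstacle here: the argument is essentially two standard moves, Stokes' theorem and the orthogonality of $\Lambda^2_\pm$. The only points demanding a little care are the commutation of restriction with $d$ and the wedge–inner product identity for $2$-forms; the conceptual content lies in identifying the exact term as the reparametrization-invariant piece and the self-dual projection as the minimal-surface obstruction, in line with the earlier remark that $-\iota_H\phi_{|\Sigma}=\tau_{2|\Sigma}^+$.
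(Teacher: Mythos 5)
Your proposal is correct and is exactly the argument the paper leaves implicit: integrate the preceding Lemma, kill the exact term $d\iota_{Z^T}\psi$ by Stokes' theorem on the closed manifold $\Sigma$, and use that $\iota_{Z^\perp}\phi_{|\Sigma}\in\Lambda^2_+(\Sigma)$ so that only $\tau_{2|\Sigma}^+$ survives the wedge. No gaps; both steps are carried out properly.
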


Notice that if also $d\psi=0$ then $\tau_2=0$, so our formulae agree with calibration theory: $\Sigma_0$ is minimal.

\paragraph{Moduli spaces.}
The condition $d\phi=0$ implies that the space of coassociative deformations of an initial compact coassociative submanifold $\Sigma$ 
forms a smooth moduli space $\mathcal{M}$, cf.~e.g.~\cite{G2lectures}. 
The theory shows that 
\begin{enumerate}[(i)] 
\item the isomorphism $T\Sigma^\perp\simeq\Lambda^2_+(\Sigma)$ relates infinitesimal deformations in $\mathcal{M}$ 
to self-dual harmonic 2-forms; 
\item these integrate to actual deformations, so the dimension of $\mathcal{M}$ 
coincides with that of the space of self-dual harmonic 2-forms on $\Sigma$. Hodge theory implies that this dimension is topological: $\dim(\mathcal{M})=b_2^+(\Sigma)$.
\end{enumerate}
The restriction to normal vector fields corresponds to the fact that $\mathcal{M}$ is defined modulo reparametrizations, 
i.e., it contains non-parametrized submanifolds.

Given infinitesimal deformations $Z_1,Z_2\in T_\Sigma\mathcal{M}$, the $L^2$ metric $\int_\Sigma Z_1\cdot Z_2\vol_\Sigma$ defines a canonical Riemannian structure on $\mathcal{M}$.

We could decide to vary $\Sigma$ only within this moduli space. In this case the proof of the first variation formula simplifies, because
$$\frac{d}{dt}\Vol(\Sigma_t)_{|t=0}=\frac{d}{dt}\int_\Sigma\iota_t^*\psi_{|t=0}=\int_\Sigma\mathcal{L}_Z\psi.$$
The proof now continues as before. 
The conclusion is of course weaker: this proof allows us to identify only the $L^2$-projections of $-H$ and $\tau_{2|\Sigma}^+$ onto $T\mathcal{M}$.

\paragraph{Fibrations.}
Assume $M$ (as usual satisfying $d\phi=0$) admits a coassociative fibration over a base space $B$, with compact fibres. 
Let $\Sigma$ be the fibre above $b\in B$. It is natural to view $B$ as a submanifold in the moduli space $\mathcal{M}$ defined by $\Sigma$. 
The relationship between $T_bB$ and $T_\Sigma\mathcal{M}$ is defined by the fact that any $Z\in T_bB$ admits a unique lift to a vector field normal to 
$\Sigma$ (with respect to the metric on $M$). 
The fibration structure implies that this vector field corresponds to a deformation through coassociative fibres, so it lies in $T_\Sigma\mathcal{M}$. 
We can endow $B$ with the metric induced by the metric on $\mathcal{M}$. In particular, $B$ can be locally identified with $\mathcal{M}$ 
if and only if they have the same dimension, i.e., if and only if $b_2^+(\Sigma)=3$: this is a topological condition on $\Sigma$.

The fact that $\Sigma$ fits into a coassociative fibration has further consequences. Indeed, the projection provides an isomorphism between each 
$T_p\Sigma^\perp$ and $T_bB$ so the normal bundle, thus $\Lambda^2_+(\Sigma)$, is trivial. Choose a basis $Z_1,Z_2,Z_3$ for $T_bB$. 
We will use the same notation to denote the corresponding normal vector fields along $\Sigma$. 
The forms $Z_i\lrcorner\phi$ provide a basis for the self-dual forms at each point. Any other self-dual form $\alpha$ on $\Sigma$ must be, at each point, 
a linear combination of these: $\alpha=a_iZ_i\lrcorner\phi$. Coassociative deformation theory implies that  $Z_i\lrcorner\phi$ are harmonic. 

In general, there will be no particular constraint on the pointwise lengths of the lifted vector fields. These lengths are constant if and only if the fibration is 
a Riemannian submersion; this condition leads to further constraints on the fibres. 
Specifically, Baraglia \cite{Baraglia} shows that $B$ can be endowed with a Riemannian structure such that the projection is a Riemannian submersion 
if and only if the fibres, with the induced metric, have a hyper-K\"ahler structure (his proof only requires $d\phi=0$).

In particular, general theory shows that such fibres must be either K3 surfaces or flat tori. In both cases $b_2^+(\Sigma)=3$, so $B\simeq \mathcal{M}$. Baraglia shows that (up to a constant factor) this identification further equates his metric on $B$ with the $L^2$ metric on $\mathcal{M}$. In other words, in this setting the moduli space construction is perfectly aligned to the fibration structure.

In Sections \ref{s:erp}, \ref{s:example} we will find examples of coassociative fibrations whose fibres are flat tori. We may then apply the above.

\begin{remark*}
It should be noted that Baraglia \cite{Baraglia} proved that, for topological reasons, 
if $M$ is compact and both its $G_2$ calibrations are closed (i.e., $d\phi=0$ and $d\psi=0$) then it does not admit smooth coassociative fibrations. 
Our methods and statements however are local, we will never require $M$ compact, nor will we require both calibrations to be closed.

Other references for coassociative fibrations include \cite{GYZ}, which first triggered interest in this topic, and \cite{Donaldson}.
\end{remark*} 

\section{Second variation formulae}\label{s:second}
Let us again start by reviewing the classical formula \cite{Simons}.

\paragraph{The classical setting.}
We shall use the same notation as before, but to simplify we shall assume $Z$ is always perpendicular to $\Sigma_t$: this will not change the volumes. 
The same methods show that, for any $t$, 
$$\frac{d}{dt}\Vol(\Sigma_t)=-\int_\Sigma H\cdot Z\,\iota_t^*\vol_t.$$
The starting point for the second variation formula is the assumption that $\Sigma_0$ is minimal, i.e., $H=0$. 
The main goal is to then detect conditions ensuring that $\Sigma_0$ is a local minimum: stability. 

Differentiating the above expression, we find
\begin{align*}
\frac{d^2}{dt^2}\Vol(\Sigma_t)_{|t=0}&=-\int_\Sigma \nabla_Z(H\cdot Z)\,\iota_0^*\vol_0+\int_\Sigma (H\cdot Z)^2\iota_0^*\vol_0\\
&=-\int_\Sigma \nabla_ZH\cdot Z\,\iota_0^*\vol_0.
\end{align*}

These formulae indicate clearly that, as expected, if $\iota_t$ is a curve of minimal immersions then the volume remains constant 
and $Z$ is a Jacobi vector field. They provide however no means of controlling the stability. 
This can be achieved by examining the integrand more closely, as follows.

\begin{prop}\label{2Classic} 
Assume $Z$ normal and $\Sigma_0$ minimal. Then
$$\frac{d^2}{dt^2}\Vol(\Sigma_t)_{|t=0}=\int_\Sigma(-(\nabla_{e_i}Z\cdot e_j)^2-R(e_i,Z)Z\cdot e_i+(\nabla_{e_i}Z\cdot f_j)^2)\,\iota_0^*\vol_0,$$
where $e_1,\dots,e_k$ is an orthonormal basis of $T_p\Sigma$ at any given point, $f_1,\dots,f_{n-k}$ is an orthonormal basis of $T_p\Sigma^\perp$ and $R$ is the curvature tensor of $M$.
\end{prop}
Basically, the first term is the norm squared of $(\nabla Z)^\top$ (restricted to $\Sigma$), the second term is the trace along $\Sigma$ of the appropriate curvature tensor and the third term is the norm squared of $(\nabla Z)^\perp$ (restricted to $\Sigma$). This explains why the expression is independent of the chosen bases.

\ 

By emphasizing geometrically meaningful components in this way we obtain useful conclusions, such as the following.

\begin{cor}
If $\Sigma_0$ is totally geodesic and the ambient curvature is negative, then $\Sigma_0$ is strictly stable, i.e., $\frac{d^2}{dt^2}\Vol(\Sigma_t)_{|t=0}> 0$.
\end{cor} 

\begin{proof}[Proof of Proposition $\ref{2Classic}$] 
The proof of the proposition is the combination of the following two calculations. As before, $v_1,\dots,v_k$ will denote the local basis defined by normal coordinates on $\Sigma$ at $p$, at time $t=0$. We can extend it in the direction $Z$ using $\iota_{t*}$. 

\begin{enumerate}[1.]
\item  At $p$,
\begin{align*}
-\nabla_Z(H\cdot Z)&=-\nabla_Z (g^{ij}\nabla_{v_i}v_j\cdot Z)\\
&=-(\nabla_Z g^{ij})\nabla_{v_i}v_j\cdot Z-g^{ij}\nabla_Z(\nabla_{v_i}v_j\cdot Z),
\end{align*}
where $g_{ij}=v_i\cdot v_j$ generates a matrix $G$ and $g^{ij}$ are the coefficients of $G^{-1}$. Then, differentiating $GG^{-1}=\mathrm{Id}$, 
we find that, at $p$, $\nabla_Z g^{ij}=-\nabla_Z g_{ij}$. This shows that
\begin{equation*}
-\nabla_Z(H\cdot Z)=(\nabla_Z g_{ij})\nabla_{v_i}v_j\cdot Z-g^{ij}\nabla_Z(\nabla_{v_i}v_j\cdot Z).
\end{equation*}
Now notice
\begin{align*}
\nabla_Zg_{ij}&=\nabla_Z(v_i\cdot v_j)=(\nabla_Zv_i)\cdot v_j+v_i\cdot(\nabla_Z v_j)\\
&=(\nabla_{v_i}Z)\cdot v_j+v_i\cdot(\nabla_{v_j}Z)\\
&=-Z\cdot\nabla_{v_i}v_j-Z\cdot\nabla_{v_j}v_i=-2Z\cdot\nabla_{v_i}v_j,
\end{align*}
using the symmetry of the second fundamental form. This leads to the following conclusion: at $p$,
$$-\nabla_Z(H\cdot Z)=-2(\nabla_{v_i}v_j\cdot Z)^2-\nabla_Z(\nabla_{v_i}v_i\cdot Z).$$
\item  At $p$ and using $[Z,v_i]=0$,
\begin{align*}
R(v_i,Z)Z\cdot v_i&=-R(v_i,Z)v_i\cdot Z\\
&=-\nabla_{v_i}\nabla_Z v_i\cdot Z+\nabla_Z\nabla_{v_i}v_i\cdot Z\\
&=-\nabla_{v_i}\nabla_{v_i}Z\cdot Z+\nabla_Z(\nabla_{v_i}v_i\cdot Z)-\nabla_{v_i}v_i\cdot\nabla_ZZ\\
&=-\nabla_{v_i}(\nabla_{v_i}Z\cdot Z)+\nabla_{v_i}Z\cdot\nabla_{v_i}Z+\nabla_Z(\nabla_{v_i}v_i\cdot Z)\\
&\ \ -(\nabla_{v_i}v_i)^\top\cdot(\nabla_ZZ)^\top-(\nabla_{v_i}v_i)^\perp\cdot(\nabla_ZZ)^\perp.
\end{align*}
The latter two terms vanish because $H=0$ and the coordinates are normal. We conclude that
\begin{align*}
R(v_i,Z)Z\cdot v_i&=-\nabla_{v_i}(\nabla_Z v_i\cdot Z)+(\nabla_{v_i}Z)^\top\cdot(\nabla_{v_i}Z)^\top+(\nabla_{v_i}Z)^\perp\cdot(\nabla_{v_i}Z)^\perp\\
&\ \ +\nabla_Z(\nabla_{v_i}v_i\cdot Z)\\
&=\div_\Sigma((\nabla_ZZ)^\top)+(\nabla_{v_i}Z\cdot v_j)^2+(\nabla_{v_i}Z\cdot f_j)^2+\nabla_Z(\nabla_{v_i}v_i\cdot Z).
\end{align*}
\end{enumerate}
Substituting the second formula into the first proves the proposition.
\end{proof}

\begin{remark*}
One can also study the variation of the volume forms before integrating them. If we collect all terms appearing in the above calculations and, as above, we write $\iota_t^*\vol_t=f(t)\iota_0^*\vol_0$, we find
\begin{align*}
f''(0)=&-g(\nabla_{e_i}e_j,Z)^2-g(R(e_i,Z)Z,e_i)+\mbox{div}_\Sigma((\nabla_Z Z)^\top)\\
&-g(H,(\nabla_ZZ)^\perp)+g((\nabla_{e_i}Z), f_j)^2+g(H,Z)^2,
\end{align*}
where we assume $Z$ normal (but $\Sigma_0$ not necessarily minimal).
\end{remark*}

\paragraph{The second variation formula for coassociatives.} 

Now assume $M$ is endowed with a closed $G_2$ structure. Assume the initial submanifold $\Sigma=\Sigma_0$ is coassociative; for our first calculations there is no need to assume that it is also minimal.
Our goal is to work out a new expression for the second variation formula, adapted to this set-up. 

Restricting our attention to normal variations $Z$, we find

\begin{align*}
f''(0)\,\iota_0^*\vol_0&=\frac{d^2}{dt^2}\sqrt{\psi^2+|\mathcal{C}|^2}_{|t=0}\,\iota_0^*\vol_0=(\ddot\psi+\mathcal{C}_Z\cdot\mathcal{C}_Z)\,\iota_0^*\vol_0\\
&=\mathcal{L}_Z\mathcal{L}_Z\psi_{|\Sigma}+|\mathcal{C}_Z|^2\iota_0^*\vol_0\\
&=(d\iota_Z+\iota_Zd)(d\iota_Z+\iota_Zd)\psi_{|\Sigma}+|\mathcal{C}_Z|^2\iota_0^*\vol_0\\
&=d\iota_Zd\iota_Z\psi_{|\Sigma}+\iota_Zd\iota_Zd\psi_{|\Sigma}+|\mathcal{C}_Z|^2\iota_0^*\vol_0.
\end{align*}
The term $d\iota_Zd\iota_Z\psi$, restricted to $\Sigma$, vanishes under integration. 
Alternatively, it disappears by the calculation $\iota_Zd\iota_Z\psi_{|\Sigma}=0$.  Indeed, let $X_1,X_2,X_3\in T\Sigma$. 
We may assume $[Z,X_i]=0$, for $i=1,2,3$. Then, using the invariant formula for the exterior derivative of $\iota_Z\psi$, we find
\[
\iota_Zd\iota_Z\psi(X_1,X_2,X_3)=d\iota_Z\psi(Z,X_1,X_2,X_3) =\dots= Z(\iota_Z\psi(X_1,X_2,X_3)) = 0,
\]
since $\psi$ vanishes under contraction with one normal (associative) and three tangent (coassociative) vectors. 
We shall perform a similar calculation, in more detail, for $\iota_Zd\iota_Z\phi$ in the proof of Lemma \ref{lem:tau2}.

Furthermore,
\begin{align*}
\iota_Zd\iota_Zd\psi&=\iota_Zd\iota_Z(\tau_2\wedge\phi)\\
&=\iota_Zd(\iota_Z\tau_2\wedge\phi+\tau_2\wedge\iota_Z\phi)\\
&=\iota_Z((d\iota_Z\tau_2)\wedge\phi-\iota_Z\tau_2\wedge d\phi+d\tau_2\wedge\iota_Z\phi+\tau_2\wedge d\iota_Z\phi)\\
&=\iota_Zd\iota_Z\tau_2\wedge\phi+d\iota_Z\tau_2\wedge\iota_Z\phi+\iota_Zd\tau_2\wedge\iota_Z\phi+\iota_Z\tau_2\wedge d\iota_Z\phi\\
&\ \ \ +\tau_2\wedge\iota_Zd\iota_Z\phi,
\end{align*}
where we use the facts $d\phi=0$, $\iota_Z\iota_Z\phi=0$.

Substituting this into the original calculation and using $\phi_{|\Sigma}=0$, we find
\begin{align*}
f''(0)\,\iota_0^*\vol_0&=d\iota_Z\tau_2\wedge\iota_Z\phi_{|\Sigma}+\iota_Zd\tau_2\wedge\iota_Z\phi_{|\Sigma}
+\iota_Z\tau_2\wedge d\iota_Z\phi_{|\Sigma}\\
&\ \ \ +\tau_2\wedge\iota_Zd\iota_Z\phi_{|\Sigma}+|\mathcal{C}_Z|^2\iota_0^*\vol_0\\
&=d(\iota_Z\tau_2\wedge\iota_Z\phi)_{|\Sigma}+2\iota_Z\tau_2\wedge d\iota_Z\phi_{|\Sigma}+\iota_Zd\tau_2\wedge \iota_Z\phi_{|\Sigma}\\
&\ \ \ +\tau_2\wedge\iota_Zd\iota_Z\phi_{|\Sigma}+|\mathcal{C}_Z|^2\iota_0^*\vol_0.
\end{align*}  

\begin{lem}\label{lem:tau2}
Assume $\Sigma$ is coassociative and minimal. Then  
$$\tau_2\wedge\iota_Zd\iota_Z\phi_{|\Sigma}=\tau_2\wedge\gamma_{Z|\Sigma},$$
where $\gamma_Z\in\Lambda^2_-(\Sigma)$ is defined in terms of the second fundamental form: 
$$\gamma_Z(X_1,X_2)\coloneqq\iota_Z\phi((\nabla_{X_1} Z)^\top,X_2)+\iota_Z\phi(X_1,(\nabla_{X_2} Z)^\top).$$
\end{lem}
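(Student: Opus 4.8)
The plan is to evaluate the 2-form $\iota_Z d\iota_Z\phi$ when restricted to $\Sigma$, and show that wedging it against $\tau_2$ produces the same result as wedging against the explicitly-defined form $\gamma_Z$. The starting observation is that $\phi$ vanishes identically on $\Sigma$ (coassociativity) and, by deformation theory, $d\iota_Z\phi$ also vanishes on $\Sigma$. So the term $\iota_Z d\iota_Z\phi$ is genuinely second-order in the second fundamental form, and the goal is to extract exactly which part survives restriction to $\Sigma$.

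First I would fix a point $p\in\Sigma$ and work in the pointwise $G_2$ model, arranging $\Sigma$ tangent to $\Span(e_4,e_5,e_6,e_7)$ and its normal space spanned by $e_1,e_2,e_3$, with $Z$ normal. I would evaluate $(\iota_Z d\iota_Z\phi)(X_1,X_2)$ on tangent vectors $X_1,X_2\in T_p\Sigma$ by expanding $d\iota_Z\phi$ via the standard formula for $d$ of a 1-contracted 3-form, writing everything in terms of $\nabla_{X_i}(\iota_Z\phi)$ and covariant derivatives of $Z$. Since $\iota_Z\phi_{|\Sigma}\in\Lambda^2_+(\Sigma)$ is a known object (the isomorphism $T\Sigma^\perp\simeq\Lambda^2_+$), the derivatives split naturally into tangential and normal parts of $\nabla Z$. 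The key structural fact I expect to exploit is that the normal part $(\nabla_{X_i}Z)^\perp$ contributes terms involving $\nabla$ of $\phi$ (torsion terms) whose contribution against $\tau_2$ either vanishes by type or is absorbed, whereas the tangential part $(\nabla_{X_i}Z)^T$ reproduces precisely the symmetric expression defining $\gamma_Z$.

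The crucial simplification comes from the minimality hypothesis $H=0$: I would use this to kill the trace (self-dual) part of the relevant derivative terms, so that what remains of $\iota_Z d\iota_Z\phi$, after restriction and after pairing with $\tau_2$, is exactly $\tau_2\wedge\gamma_Z$. Concretely, $\gamma_Z$ as written is symmetric in the roles of $X_1,X_2$ through the second fundamental form $(\nabla_{X_i}Z)^T$, and I would verify it lands in $\Lambda^2_-(\Sigma)$ by a direct computation in the model: since $\tau_{2|\Sigma}\in\Lambda^2_-$ (a consequence of $H=0$ noted after the first variation lemma), only the anti-self-dual part of $\iota_Z d\iota_Z\phi$ matters for the wedge product, and $\gamma_Z$ is precisely that anti-self-dual part.

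The main obstacle I anticipate is bookkeeping the second-order terms correctly: $\iota_Z d\iota_Z\phi$ contains both $\nabla^2$-type contributions and products of first derivatives of $Z$ with $\nabla\phi = T(\cdot)\lrcorner\psi$, and one must carefully separate the terms that survive restriction to $\Sigma$ from those that vanish because $\phi_{|\Sigma}=0$ or $d\iota_Z\phi_{|\Sigma}=0$. The delicate point is confirming that the torsion (normal-derivative) contributions pair to zero against $\tau_2\in\Lambda^2_{14}$ by the $G_2$ representation theory, leaving only the tangential second-fundamental-form terms. Once the model computation isolates these, identifying the survivor with $\gamma_Z$ and checking $\gamma_Z\in\Lambda^2_-(\Sigma)$ should be a routine verification in Bryant's conventions.
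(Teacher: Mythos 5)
Your plan is essentially the paper's proof: expand $\iota_Zd\iota_Z\phi(X_1,X_2)$ via the exterior-derivative formula (only the term $Z\,\phi(Z,X_1,X_2)$ survives when $[Z,X_i]=0$), apply Leibniz, discard the pieces that vanish by type on a coassociative plane, and observe that the remaining torsion term $\iota_Z\nabla_Z\phi$ and the $(\nabla_ZZ)^\perp$ term restrict to $\Lambda^2_+(\Sigma)$ while $H=0$ forces $\tau_{2|\Sigma}\in\Lambda^2_-(\Sigma)$, so only the anti-self-dual piece $\gamma_Z$ survives the wedge. The only point to phrase more carefully is that the torsion contribution dies not by pairing with $\tau_2\in\Lambda^2_{14}$ via $G_2$ representation theory but simply because $\Lambda^2_+\wedge\Lambda^2_-=0$ on $\Sigma$ --- the mechanism you correctly identify at the end --- together with the algebraic fact that $\gamma_Z$ is anti-self-dual because the shape operator in the direction $Z$ is symmetric and (by $H=0$) trace-free.
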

\begin{proof}
Given $\alpha\in\Lambda^2(M)$, recall the formula
\begin{align*}
d\alpha(X_0,X_1,X_2)&=X_0\alpha(X_1,X_2)-X_1\alpha(X_0,X_2)+X_2\alpha(X_0,X_1)\\
&\quad -\alpha([X_0,X_1],X_2)+\alpha([X_0,X_2],X_1)-\alpha([X_1,X_2],X_0).
\end{align*}
Let us apply it to $\alpha\coloneqq\iota_Z\phi$, so as to calculate $d\iota_Z\phi$. Calculating $\iota_Zd\iota_Z\phi$ corresponds to choosing 
$X_0\coloneqq Z$. 
Restricting to $\Sigma$ means that we must choose $X_1,X_2\in T\Sigma$. In our situation we may assume $[Z,X_1]=[Z,X_2]=0$. 
Only one of the above terms is then non-vanishing, so
\begin{align*}
\iota_Zd\iota_Z\phi(X_1,X_2)&=d\iota_Z\phi(Z,X_1,X_2)\\
&=Z\,\iota_Z\phi(X_1,X_2)\\
&=Z\,\phi(Z,X_1,X_2)\\
&=(\nabla_Z\phi)(Z,X_1,X_2)+\phi(\nabla_ZZ,X_1,X_2)+\phi(Z,\nabla_ZX_1,X_2)\\
&\ \ +\phi(Z,X_1,\nabla_ZX_2)\\
&=(\nabla_Z\phi)(Z,X_1,X_2)+\phi((\nabla_ZZ)^\perp,X_1,X_2)\\
&\ \ +\phi(Z,(\nabla_{X_1}Z)^\top,X_2)+\phi(Z,X_1,(\nabla_{X_2}Z)^\top)\\
&=\iota_Z(\nabla_Z\phi)(X_1,X_2)+\phi((\nabla_ZZ)^\perp,X_1,X_2)+\gamma_Z(X_1,X_2),
\end{align*}
where we use the fact that $\phi$ vanishes under contraction with one tangent (coassociative) and two normal (associative) vectors 
or with three tangent (coassociative) vectors. Notice the appearance of the second fundamental form of $\Sigma$, making $\gamma_Z$ tensorial in $Z$. 

The calculations above have generated three terms. We now need to wedge them with $\tau_2$. 
Recall that minimality implies $\tau_{2|\Sigma}\in\Lambda^2_-(\Sigma)$. Let us consider the three terms in turn.

\begin{enumerate}[1.]
\item Consider the term $\iota_Z\nabla_Z\phi=Z\lrcorner(\tau_2(Z)^\sharp\lrcorner\psi)$. 
The first contraction is with respect to a normal (associative) vector. 
Restricting to $\Sigma$ means that we will be further contracting $\psi$ with two tangent (coassociative) vectors. 
The remaining contraction must thus again be with a normal (associative) vector, i.e.
$$\iota_Z\nabla_Z\phi_{|\Sigma}=Z\lrcorner((\tau_2(Z)^\sharp)^\perp\lrcorner\psi)_{|\Sigma}.$$
Recall from the model expression of $\psi$ that the result lies in $\Lambda^2_+(\Sigma)$, so this term vanishes after wedging.

\item The second term is of the form $\Lambda^2_+(\Sigma)$, so it vanishes after wedging.

\item To conclude, we need to show that $\gamma_Z\in\Lambda^2_-(\Sigma)$. 
This is a pointwise computation which we can perform on $\R^4$ with the standard structures. It is a special case of the following general statement. 

Let $\alpha\in\Lambda^2_+(\R^4)$ and $f\in\End(\R^4)$ be symmetric and trace-free. Then
$$\alpha_f(X_1,X_2)\coloneqq\alpha(f(X_1),X_2)+\alpha(X_1,f(X_2))$$
is anti-selfdual.

To prove this let $E_{12+34}$, $E_{13-24}$, $E_{14+23}$ be the matrices corresponding to the standard basis of $\Lambda^2_+(\R^4)$. 
The matrix $A$ representing $\alpha$ is a linear combination of these. Let $M$ be the matrix representing $f$. 
It then suffices to check that $MA+AM$ is a linear combination of the matrices corresponding to the standard basis of $\Lambda^2_-(\R^4)$.
\end{enumerate}
\end{proof}

Integration leads to the following conclusion.

\begin{thm}\label{thm:secvar}
Let $M$ be endowed with a closed $G_2$ structure. Assume $\Sigma_0$ is coassociative and minimal. Then, for any normal variation $Z$,
\begin{equation*}\label{eq:secvar}
\frac{d^2}{dt^2}\Vol(\Sigma_t)_{|t=0}=\int_\Sigma 2\iota_Z\tau_2\wedge d\iota_Z\phi+\iota_Zd\tau_2\wedge \iota_Z\phi+\tau_2\wedge\gamma_Z
+|\mathcal{C}_Z|^2\iota_0^*\vol_0.
\end{equation*}
In particular, assume the variation takes place within the coassociative moduli space $\mathcal{M}$, i.e., $Z\in T_\Sigma\mathcal{M}$. Then 
\begin{equation*}\label{eq:moduli_secvar}
\frac{d^2}{dt^2}\Vol(\Sigma_t)_{|t=0}=\int_\Sigma \iota_Zd\tau_2\wedge \iota_Z\phi+\tau_2\wedge\gamma_Z.
\end{equation*}
\end{thm}
 
 \medskip
 
The two terms $d\tau_2$, $\mathcal{C}_Z$ admit alternative formulations, as follows.

\begin{enumerate}[1.]
\item Recall from \cite{Bryant} the existence of a pointwise $G_2$-equivariant isomorphism
\begin{equation}\label{eq:i_map}
\i: S^2\rightarrow \Lambda^3_1\oplus\Lambda^3_{27},  
\end{equation}
defined on the space $S^2$ of symmetric 2-tensors. 
Viewed on $\R^7$, it admits a compact expression in terms of Bryant's notation, \cite[eq.~(2.17)]{Bryant}: given $h=\sum_{i,j}h_{ij}\,e^i\otimes e^j\in S^2$,  
\[ 
\begin{split}
\i(h)	&=	\sum_{r,j,k,l}\epsilon_{rkl}h_{rj}\,e^j\wedge e^k\wedge e^l\\ 
	&= \sum_{r,j,k,l} \frac13 \left(\epsilon_{rkl}h_{rj} + \epsilon_{rjk}h_{rl}+\epsilon_{rlj}h_{rk}\right) e^j\wedge e^k\wedge e^l \\
	& = 2\sum_{j<k<l}\sum_{r}\left(\epsilon_{rkl}h_{rj} + \epsilon_{rjk}h_{rl}+\epsilon_{rlj}h_{rk}\right) e^j\wedge e^k\wedge e^l.
\end{split} 
\]
This formula shows that $\i(g_\phi)=6\phi$. 
Moreover, the image of the subspace $S^2_0$ of trace-free symmetric 2-tensors coincides with $\Lambda^3_{27}$.

Recall also the following two formulae, respectively \cite{Bryant} eq.~(4.36) and (4.39), valid when $d\phi=0$. 
The first describes the scalar curvature of $M$, showing it is non-positive:
\begin{align*}
\tr_g(\Ric)&=-\frac12|\tau_2|^2,\\
d\tau_2&=\frac{3}{14}|\tau_2|^2\phi+\frac12\star(\tau_2\wedge\tau_2)-\frac12\i(\Ric_0),
\end{align*}
where $\Ric_0$ is the trace-free Ricci tensor of $M,$ i.e., $\Ric=\Ric_0+\tfrac17\tr(\Ric)g$. It follows that $\i(\Ric)=\i(\Ric_0)+\tfrac67\tr(\Ric)\phi$. 
Combining these formulae we find
\begin{equation}\label{eq:dtau2}
d\tau_2=\frac12\star(\tau_2\wedge\tau_2)-\frac12\i(\Ric).
\end{equation}
If we plug equation \eqref{eq:dtau2} into our second variation formula, 
we obtain an alternative expression which emphasizes the role of the ambient Ricci curvature. 

\item Recall that  $\mathcal{C}_Z$ was defined in the proof of Lemma \ref{l:firstvar_G2}. 
This term already appears in \cite{LeVanzura}. In their Lemma 3.13 (which holds also in the case $d\phi=0$, without assuming $d\psi=0$) it is shown that 
\begin{equation}\label{eq:CZ}
\mathcal{C}_Z\coloneqq \left|\nabla_Z(\mathcal{C}(\iota_{t^*}v_1,\dots,\iota_{t^*}v_4))_{|t=0}\right|^2=|d\iota_Z\phi_{|\Sigma}|^2.
\end{equation}
If one also assumes $d\psi=0$, then this is the only term appearing in the second variation formula so our formula coincides with that of \cite{McLean} 
and confirms that coassociative submanifolds are stable minima, as expected.
\end{enumerate}

\begin{remark*}
Notice that the second variation formula becomes tensorial with respect to $Z$ when restricted to coassociative variations. 
This is related to the fact that the moduli space is finite-dimensional, so these variations cannot be perturbed via arbitrary functions. 
\end{remark*}

\paragraph{The second variation formula for Lagrangians.}The appearance of the ambient Ricci tensor is an interesting fact.
 It replaces the curvature term which appears, without taking any compact geometric form, in the standard second variation formula. 
 An analogous situation occurs also in \cite{Oh} in a way that is perhaps more transparent than above.

Assume $(M^{2n},J,\omega)$ is K\"ahler and $\Sigma^n\hookrightarrow M$ is Lagrangian, i.e., $\omega_{|\Sigma}\equiv 0$. 
The K\"ahler form then provides an isomorphism
\begin{equation*}
T\Sigma^\perp\simeq\Lambda^1(\Sigma), \ \ Z\mapsto\zeta\coloneqq\iota_Z\omega.
\end{equation*}
Lagrangian submanifolds are not calibrated, so Oh's context is closer in spirit to the general Riemannian situation than to ours. 
On the other hand, notice the analogies with $\phi$, with the coassociative condition $\phi_{|\Sigma}\equiv 0$ 
and with the isomorphism $T\Sigma^\perp\simeq\Lambda^2_+(\Sigma)$. 

Oh's second variation formula is as follows. Assume $\Sigma_0$ is minimal Lagrangian. Consider any normal variation $Z$. Then
$$\frac{d^2}{dt^2}(\Vol(\Sigma_t))_{|t=0}=\int_\Sigma\left((\Delta \zeta,\zeta)-\Ric(Z,Z)\right)\iota_0^*\vol_0,$$
where $\Delta$ denotes the Hodge Laplacian on $\Sigma_0$ and $\Ric$ is the ambient Ricci curvature. 
In particular, it shows that $\Sigma_0$ is stable if $\Ric\leq 0$.

The proof consists in rearranging the terms in the standard second variation formula. Using normal coordinates on $\Sigma_0$,
\begin{align*}
(\nabla_{e_i}Z\cdot f_j)^2&=(\nabla_{e_i}^\perp Z,\nabla_{e_i}^\perp Z)\\
&=\nabla_{e_i}(\nabla_{e_i}^\perp Z,Z)-(\nabla_{e_i}^\perp\nabla_{e_i}^\perp Z,Z)\\
&=(1/2)\Delta(|Z|^2)-(\Delta^\perp Z,Z).
\end{align*}
The first term on the RHS vanishes by integration by parts. The Weitzenb\"ock identity on $\Sigma$ shows that 
$\Delta\zeta=-\omega(\Delta^\perp Z,\cdot)+\Ric_\Sigma(JZ,\cdot)$. Evaluating this on $JZ$ we find
\begin{align*}
-(\Delta^\perp Z,Z)&=\Delta\zeta\cdot\zeta-\Ric_\Sigma(JZ,JZ)\\
&=\Delta\zeta\cdot\zeta-R_\Sigma(e_i,JZ)JZ\cdot e_i.
\end{align*}
The Gauss equation for curvature, together with $\nabla J=0$ and $H=0$, yields
\begin{align*}
R(e_i,JZ)JZ\cdot e_i&=R_\Sigma(e_i,JZ)JZ\cdot e_i+|(\nabla_{e_i}JZ)^\perp|^2+(\nabla_{e_i}{e_i})^\perp\cdot(\nabla_{JZ}JZ)^\perp\\
&=R_\Sigma(e_i,JZ)JZ\cdot e_i+|(\nabla_{e_i}Z)^\top|^2.
\end{align*}
Also recall that, since $M$ is K\"ahler, $R(e_i,JZ)JZ\cdot e_i=R(Je_i,Z)Z\cdot Je_i$. 
Comparing this with the standard second variation formula allows us to complete the curvature term so as to obtain $-\Ric(Z,Z)$ 
and to cancel the term depending on the second fundamental form.

We may apply these same calculations to the formula found above for $f''(0)$, corresponding to the second derivative of the volume form. 
We then find that, for $\Sigma_0$ minimal Lagrangian and using normal variations,
$$f''(0)=(1/2)\Delta(|Z|^2)+(\Delta \zeta,\zeta)-\Ric(Z,Z)+\div_{\Sigma}((\nabla_ZZ)^\top).$$

\paragraph{Conclusions.}
Let us take a moment to summarize analogies and differences between the three second variation formulae discussed in this section: 
\begin{enumerate}[1.]
\item The ambient manifolds for the Riemannian and the Lagrangian formulae are automatically torsion-free, so the torsion tensor plays no role. 
Our formula makes precise the role played by torsion in the $G_2$ context.
\item The coassociative formula, via equations (\ref{eq:dtau2}), (\ref{eq:CZ}), can be summarized as 
$$\frac{d^2}{dt^2}(\Vol(\Sigma_t))_{|t=0}=\int_{\Sigma}(\mbox{Laplacian-Ricci+torsion}).$$
Up to torsion, this is remarkably similar to the Lagrangian formula. 
\item Both the Riemannian and the coassociative formulae show the special role played by totally geodesic submanifolds.
\end{enumerate}
Furthermore, the coassociative formula is particularly well-adapted to the special geometric features of $G_2$ geometry; 
in particular, to the existence of moduli spaces.

\section{Controlling the torsion}\label{secTorsion}

We wish to find geometric situations in which our second variation formula, Theorem \ref{thm:secvar}, 
produces a non-negative (or positive) result even in the presence of torsion: this will mean that the submanifold $\Sigma_0$ is a (strict) local minimum point for the volume functional. 

As already seen, we can cancel the term $d\iota_Z\phi$ by moving within the coassociative moduli space. 
We can cancel the term $\gamma_Z$ by imposing that the submanifold be totally geodesic. 
The main question is thus how to control the term $\iota_Z d\tau_2\wedge\iota_Z\phi$. In essence, this entails controlling $d\tau_2$.

\smallskip

In \cite[eq.~(4.65)]{Bryant}, Bryant introduces the class of $G_2$ structures satisfying a certain ``quadratic condition". 
This is a strong constraint, but we shall show below that it provides a good framework within which to enforce our desired positivity. 
It relies on the following linear-algebraic construction.

Given any $\beta\in\Lambda^2_{14}$, \cite[Section 2.7.4]{Bryant} shows that:
\begin{enumerate}[1.]
\item $\star(\beta\wedge\beta)\in\Lambda^3_1\oplus\Lambda^3_{27}$. Its irreducible decomposition is
\[
\star(\beta\wedge\beta)=-\frac17|\beta|^2\phi+\left(\frac17|\beta|^2\phi+\star(\beta\wedge\beta)\right).
\]
\item Set $\gamma_\beta\coloneqq \tfrac17|\beta|^2\phi+\star(\beta\wedge\beta)\in\Lambda^3_{27}$, depending quadratically on $\beta$. Then 
$$|\gamma_\beta|=\sqrt{\tfrac67}\,|\beta|^2.$$
\end{enumerate}
Furthermore, \cite[eq.~(4.33)]{Bryant} shows that $d\phi=0$ implies that $d\tau_2\in\Lambda^3_1\oplus\Lambda^3_{27}$. 
It decomposes as $d\tau_2=\tfrac17|\tau_2|^2\phi+\gamma$, 
for some $\gamma\in\Lambda^3_{27}$. Bryant's condition concerns the situation where $\gamma$ arises from the above construction, 
applied to the case $\beta=\tau_2$.

\begin{definition}
Let $M$ be endowed with a closed $G_2$ structure $\phi$ with non-vanishing torsion. 
We say that the $G_2$ structure satisfies the {\em quadratic condition} if there exists $\lambda\in\R$ such that the irreducible decomposition of $d\tau_2$ 
is of the form
\begin{equation*}
d\tau_2=\frac17|\tau_2|^2\phi+\lambda\,\gamma_\tau,
\end{equation*}
where, as above, $\gamma_\tau\coloneqq \frac17|\tau_2|^2\phi+\star(\tau_2\wedge\tau_2)$.
\end{definition}

Given $\beta\in\Lambda^2_{14}$ and $\lambda\in\R$, we are thus interested in the algebraic properties of the 3-form on $\R^7$
\[
\gamma_{\lambda,\beta}\coloneqq \frac17|\beta|^2\phi+\lambda\,\gamma_\beta\in\Lambda^3_1\oplus\Lambda^3_{27}.
\]
 
\begin{prop}\label{prop:estimate}
Assume $|\lambda|\leq\tfrac{1}{\sqrt{21}}$. Choose any $\beta\in\Lambda^2_{14}$ and any coassociative $4$-plane $\pi$ in $\R^7$. Then the bilinear form 
$$B_{\lambda,\beta}^\pi:\pi^\perp\times\pi^\perp\rightarrow\Lambda^4(\pi), \ \ 
B_{\lambda,\beta}^\pi(Z_1,Z_2)\coloneqq\iota_{Z_1}\gamma_{\lambda,\beta}\wedge\iota_{Z_2}\phi_{|\pi},
$$
is non-negative: $B_{\lambda,\beta}^\pi(Z,Z)\geq 0$, for all $Z\in\pi^\perp$.
\end{prop}

\begin{proof}
Write
\[
B_{\lambda,\beta}^\pi(Z,Z) = \frac17|\beta|^2\iota_Z\phi\wedge\iota_Z\phi_{|\pi}+\lambda(\iota_Z\gamma_\beta\wedge\iota_Z\phi_{|\pi}).
\]
Using the $G_2$ action (which will send $\beta$ to some other $\beta'$, with $|\beta|=|\beta'|$), we may assume $Z=|Z|e_1$, $\pi=4567$. Both terms are then multiples of $\vol_{4567}$. In particular, 
\begin{align*}
\iota_Z\phi_{|\pi}\wedge\iota_Z\phi_{|\pi}&=|Z|^2(45+67)\wedge(45+67)=2|Z|^2\vol_{4567},\\ 
|\iota_Z\gamma_{\beta'|\pi}\wedge\iota_Z\phi_{|\pi}|&\leq |Z||\gamma_{\beta'}||\iota_Z\phi_{|\pi}|=|Z|^2\sqrt{\frac67}|\beta|^2\sqrt{2}.
\end{align*}
Non-negativity is thus ensured by the condition
\[
\frac27 |Z|^2|\beta|^2\geq \sqrt{\frac{12}{7}}|\lambda||Z|^2|\beta|^2.
\]
\end{proof}

Applying this to $\beta\coloneqq\tau_2$, thus $\gamma_{\lambda,\beta}=d\tau_2$, we obtain the following result. 

\begin{cor}\label{cor:stability}
Let $M$ be endowed with a closed $G_2$ structure satisfying the quadratic condition with $|\lambda|<1/\sqrt{21}$. Assume $\Sigma_0$ is coassociative and totally geodesic. Then $\Sigma_0$ is strictly stable within the coassociative moduli space.
\end{cor}

\begin{remark*}
Of course, the moduli space might consist of a single point: as mentioned, its dimension is determined by the topology of $\Sigma$. 
In this case the conclusion is trivial.
\end{remark*}

The bilinear forms seen in Proposition \ref{prop:estimate} can be generalized to any element in $\Lambda^3_1\oplus\Lambda^3_{27}$. The isomorphism with $S^2$ defined by equation (\ref{eq:i_map}) then yields the following, equivalent, expression.

\begin{prop}\label{prop:Bh}
Let $h$ be a symmetric $2$-tensor on $\R^7$. Let $\pi$ be a coassociative $4$-plane. Consider the bilinear form $B_h^\pi:\pi^\perp\times\pi^\perp\rightarrow\Lambda^4(\pi)$,
\[
B_h^\pi(Z_1,Z_2)\coloneqq\iota_{Z_1}\i(h)\wedge\iota_{Z_2}\phi_{|\pi}.
\]
Then, for any $Z_1,Z_2\in\pi^\perp$,
\begin{equation*}
B_h^\pi(Z_1,Z_2)=(4h(Z_1,Z_2)+2\tr(h_{|\pi})g(Z_1,Z_2))\vol_{\pi}.
\end{equation*}
\end{prop}
\begin{proof}
As a first step, let us assume $\pi=4567$. We shall write $h=\sum_{i,j}h_{ij}\,e^i\otimes e^j$ and $Z=z_1e_1+z_2e_2+z_3e_3$.

Choose, for example, $Z_1= e_1$, $Z_2 = e_i$. Using the symmetry of $h$ one finds:
\begin{align*}
\iota_{e_1}\i(h)=&+(h_{11}+2h_{44})e^4\wedge e^5+(-h_{11}-2h_{55})e^5\wedge e^4\\
&+(h_{11}+2h_{66})e^6\wedge e^7+(-h_{11}-2h_{77})e^7\wedge e^6\\
&+(h_{12}-2h_{47})e^4\wedge e^6+(-h_{12}-2h_{56})e^6\wedge e^4\\
&+(-h_{12}+2h_{56})e^5\wedge e^7+(h_{12}+2h_{47})e^7\wedge e^5\\
&+(-h_{13}+2h_{46})e^4\wedge e^7+(h_{13}-2h_{57})e^7\wedge e^4\\
&+(-h_{13}-2h_{57})e^5\wedge e^6+(h_{13}+2h_{46})e^6\wedge e^5.
\end{align*}
Using our previous short-hand notation, this implies
\begin{align*}
\iota_{e_1}\i(h)\wedge \iota_{e_1}\phi_{|4567}&=(4h_{11}+2(h_{44}+h_{55}+h_{66}+h_{77}))\,4567,\\
\iota_{e_1}\i(h)\wedge \iota_{e_2}\phi_{|4567}&=(4h_{12})\,4567,\\
\iota_{e_1}\i(h)\wedge \iota_{e_3}\phi_{|4567}&=(4h_{13})\,4567.
\end{align*}
Similar calculations hold for any $Z_1,Z_2\in\{e_1,e_2,e_3\}$. 

We can reduce the general case to the case above via the $G_2$ action (which will send $h$ to some other $h'$).
\end{proof}

\begin{remark}
The previous result shows, in particular, that the bilinear form $B_h^\pi$ is symmetric. This can alternatively be proved directly from the definition, as follows. 

Let us write $\i(h) = a\,\phi + \gamma\in \Lambda^3_1\oplus\Lambda^3_{27}$, for some $a\in\R$ and $\gamma\in\Lambda^3_{27}$. Then
\[
B_h^\pi(Z_1,Z_2) 	= a\iota_{Z_1}\phi\wedge\iota_{Z_2}\phi_{|\pi} +  \iota_{Z_1}\gamma\wedge\iota_{Z_2}\phi_{|\pi}
				= a\iota_{Z_2}\phi\wedge\iota_{Z_1}\phi_{|\pi} +  \iota_{Z_1}\gamma\wedge\iota_{Z_2}\phi_{|\pi}.
\]
It thus suffices to show that
\[
i_{Z_1}\gamma \wedge i_{Z_2}\phi_{|\pi} = i_{Z_2}\gamma \wedge i_{Z_1}\phi_{|\pi}. 
\]
From $\gamma\wedge\phi=0$, we obtain
\[
\begin{split}
0 	&= i_{Z_2}(i_{Z_1}(\gamma\wedge\phi)) \\
	&= i_{Z_2}(i_{Z_1}\gamma) \wedge\phi + i_{Z_1} \gamma \wedge i_{Z_2}\phi 
		- i_{Z_2} \gamma \wedge i_{Z_1}\phi + \gamma \wedge i_{Z_2} (i_{Z_1}\phi).
\end{split}
\]
Since both the first and the last summand vanish when restricted to the coassociative plane $\pi$, the required identity follows. 
\end{remark}

\section{Example: ERP structures}\label{s:erp}

The following special case of the quadratic condition provides a good testing ground for the above results.

\begin{definition}\label{def:erp}
A closed $G_2$ structure is {\em Extremally Ricci-Pinched} (ERP) if it satisfies the quadratic condition with $\lambda=\tfrac16$; equivalently,
\[
d\tau_2=\frac16\left(|\tau_2|^2\phi+\star(\tau_2\wedge\tau_2)\right).
\]
\end{definition}

In the compact case, an alternative way of introducing these structures stems from the fact that, for any closed $G_2$ structure,
\[
\int_M|\Ric_0|^2\vol\geq \frac{4}{21}\int_M|s|^2\vol,
\]
where $s$ denotes the scalar curvature.
One can show that the equality is equivalent to the quadratic condition with $\lambda=\tfrac16$, see \cite[Remark 13]{Bryant}. 
This explains the above terminology.

\begin{remark*} 
\cite[Remark 14]{Bryant}  shows that $\lambda=\tfrac16$ is the only possible value for a $G_2$ structure satisfying the quadratic condition 
on a compact manifold. 
\end{remark*}

\smallskip

Notice that $1/6<1/\sqrt{21}$: ERP manifolds thus fall within the range of Corollary \ref{cor:stability}.

Let us restrict our attention to ERP structures on compact manifolds; more generally, to ERP structures admitting a compact quotient. By \cite[Theorem 3.7]{Bryant}, such structures have {\em special torsion of positive type} in the sense of \cite{Ball}, i.e., $\tau_2^3=0$. 
The torsion is then modeled on one of the exceptional orbits of the $G_2$ action on $\Lambda^2_{14}$: specifically, up to the $G_2$ action at each point, we may assume that $\tau_2$ is of the form $c\beta_+$, where $\beta_+=e^{45}-e^{67}$ and 
$c\in\R\smallsetminus\{0\}$. 
Notice that 
\[
|\tau_2|^2=|c\beta_+|^2=2c^2, \qquad \star(\tau_2\wedge\tau_2)=\star(c\beta_+\wedge c\beta_+)=-2c^2 e^{123}.
\]
The ERP condition then implies that
\[
d\tau_2 = \frac{c^2}{3}\left(e^{145}+e^{167}+e^{246}-e^{257}-e^{347}-e^{356}\right). 
\]
Choosing, for instance, the coassociative $4$-plane $\pi=4567$, we see that the bilinear form 
$B^\pi_{1/6,\tau_2}:\pi^\perp\times\pi^\perp\to \Lambda^4\pi\cong\R$ 
introduced in Proposition \ref{prop:estimate} is given by 
\[
B^\pi_{1/6,\tau_2} = \frac23\,c^2\left(e^1\otimes e^1 + e^2\otimes e^2 + e^3\otimes e^3 \right). 
\]
This form is positive definite, as expected from Proposition \ref{prop:estimate}.

\begin{example}\label{BryEx} 
The first example of ERP manifold of this type is due to Bryant \cite{Bryant}, who described it in terms of the homogeneous space 
$M=(\mathrm{SL}(2,\C)\ltimes \C^2)/\mathrm{SU}(2)$. We shall adopt the equivalent description \cite{ClIv}, as a Lie group. 

Consider the seven-dimensional, simply connected, solvable Lie group
\[
\mathrm{G}\coloneqq \left\{
\begin{pmatrix}
\exp(t)	&	z		&	x\\
0		&	\exp(-t)	&	y\\
0		&	0		&	1
\end{pmatrix}
~|~ t\in\R,~x,y,z\in\C
\right\}
\cong \mathrm{Sol}_3\ltimes \C^2. 
\]
Its Lie algebra is isomorphic to a semidirect product $\mathfrak{g} \cong \mathfrak{s}\ltimes_\mu\mathfrak{h}$, 
where $\mathfrak{s}\cong \mathrm{Lie}(\mathrm{Sol}_3)$, 
$\mathfrak{h}$ is a $4$-dimensional Abelian ideal, and $\mu:\mathfrak{s}\rightarrow\mathrm{Der}(\mathfrak{h})\cong\mathrm{End}(\R^4)$ 
is a Lie algebra homomorphism. 

More specifically, we may choose the following basis of $\mathfrak{s}$
\[
e_1 = 
\frac12
\begin{pmatrix}
1&0&0\\
0&-1&0\\
0&0&0
\end{pmatrix},
\quad 
e_2 = 
\frac12
\begin{pmatrix}
0&1&0\\
0&0&0\\
0&0&0
\end{pmatrix},
\quad
e_3 = 
\frac12
\begin{pmatrix}
0&i&0\\
0&0&0\\
0&0&0
\end{pmatrix},
\]
and the following basis of $\mathfrak{h}$
\[
e_4 = 
\begin{pmatrix}
0&0&0\\
0&0&1\\
0&0&0
\end{pmatrix},
~
e_5 = 
\begin{pmatrix}
0&0&0\\
0&0&i\\
0&0&0
\end{pmatrix},
~
e_6 = 
-\frac12
\begin{pmatrix}
0&0&i\\
0&0&0\\
0&0&0
\end{pmatrix},
~
e_7 = 
-\frac12
\begin{pmatrix}
0&0&1\\
0&0&0\\
0&0&0
\end{pmatrix}.
\]
The non-zero Lie brackets of $\mathfrak{s}$ are
\[
[e_1,e_2]_{\mathfrak{s}} = e_2,\quad [e_1,e_3]_{\mathfrak{s}}=e_3.
\]
The map $\mu:\mathfrak{s}\rightarrow\mathrm{Der}(\mathfrak{h})$ acts as follows:
\[
\begin{array}{llll}
\mu(e_1)(e_4) = -\frac12\,e_4,& \mu(e_1)(e_5) = -\frac12\,e_5,& \mu(e_1)(e_6) = \frac12\,e_6,& \mu(e_1)(e_7) = \frac12\,e_7, \\
\mu(e_2)(e_4) = -e_7,& \mu(e_2)(e_5) = -e_6,& \mu(e_2)(e_6) = 0,& \mu(e_2)(e_7) = 0,\\
\mu(e_3)(e_4) = -e_6,& \mu(e_3)(e_5) = e_7,& \mu(e_3)(e_6) = 0,& \mu(e_3)(e_7) = 0. 
\end{array}
\]
Notice that $\mathfrak{g}$ is a seven-dimensional non-unimodular completely solvable real Lie algebra. 
The Lie bracket $[\cdot,\cdot]$ on $\mathfrak{g}$ is defined as follows: for all $x,y\in\mathfrak{s}$ and $u,v\in\mathfrak{h}$,
\[
[x,y] = [x,y]_{\mathfrak{s}},\quad [x,u] = \mu(x)(u),\quad [u,v] =  [u,v]_{\mathfrak{h}}  = 0. 
\]

Let $\mathcal{B}^*=(e^1,\ldots,e^7)$ denote the dual basis of $\mathcal{B} = (e_1,\ldots,e_7)$.  
The structure equations $\left(de^i\right)_{i=1,\ldots,7}$ of $\mathfrak{g}$ are the following
\[
\left(0, -e^{12}, -e^{13}, \frac12\,e^{14}, \frac12\,e^{15}, -\frac12\,e^{16}+e^{25}+e^{34},  -\frac12\,e^{17}+e^{24}-e^{35}\right).
\]
It is now straightforward to check that the left-invariant 3-form 
\[
\phi = e^{123}+e^{1}\wedge(e^{45}+e^{67})+e^2\wedge(e^{46}-e^{57})-e^3\wedge(e^{47}+e^{56}), 
\]
defines an ERP structure on $\mathrm{G}$ with intrinsic torsion form 
\[
\tau_2 = 3\left(e^{45}-e^{67}\right).
\]
In particular, the basis $\mathcal{B}$ is $g_\phi$-orthonormal and $|\tau_2|^2=18$.  

Within this manifold we shall examine three different coassociative fibrations, exhibiting very different properties. 
The first is due to Bryant \cite{Bryant}. The other two are new.

\smallskip

\noindent1. Via the usual identification $\mathfrak{g}\cong T_{1_{\mathrm{G}}}\mathrm{G}$, 
the ideal $\mathfrak{h} = \langle e_4,e_5,e_6,e_7 \rangle \subset \mathfrak{g}$ corresponds to the tangent space at the identity $1_{\mathrm{G}}$ 
of an Abelian subgroup $\Sigma\cong \C^2$ of $\mathrm{G}$. 
Since $\phi_{|\Sigma}=0$, $\Sigma$ is a coassociative submanifold of $(\mathrm{G},\phi)$. 
Moreover, the restriction of the left-invariant metric $g_\phi$ to $\Sigma$ is flat. 

We can define coassociative deformations using left translations. 
We conclude that the map $\pi:\mathrm{G} \to \mathrm{G}/\mathrm{\C^2}$ is a coassociative fibration with flat fibres. 
It follows from \cite{Bryant} that $\Sigma$ can be compactified by means of a suitable lattice $\mathrm{L}\subset\Sigma$ 
that is preserved by $\mathrm{S}$. The quotient space then defines a flat coassociative $T^4$-fibration. 

In Section \ref{s:first} we discussed the fact that any fibration of this type admits the structure of a Riemannian submersion. 
Let us apply this construction to the example at hand.

In general, given any two Lie groups $\mathrm{S}$, $\mathrm{H}$ and an action of $\mathrm{S}$ on $\mathrm{H}$ by automorphisms, 
consider the Lie group $\mathrm{G}\coloneqq \mathrm{S}\ltimes \mathrm{H}$. 
As differentiable manifolds, we can identify $\pi:\mathrm{G}\rightarrow \mathrm{G}/\mathrm{H}$ 
with $\pi:\mathrm{S}\times \mathrm{H}\rightarrow \mathrm{S}$. 
Since $\mathrm{H}$ is normal in $\mathrm{G}$, the identification $\mathrm{G}/\mathrm{H}\simeq \mathrm{S}$ is also a group isomorphism. 
Given any $e\in T_{1_\mathrm{S}}\mathrm{S}$, i.e., $(e,0)\in T_{1_\mathrm{G}}\mathrm{G}$, let $e$ also denote the corresponding left-invariant 
vector field on $\mathrm{G}$. Let us write $e=d/dt(s_t)_{|t=0}$ for some curve $s_t$ in $\mathrm{S}$ such that $s_0=1_{\mathrm{S}}$,  
and let $g=(s,h)\in \mathrm{G}$. 
Then
\begin{align*}
e_{|(s,h)}	&=(L_{(s,h)})_*(e)=d/dt((s,h)\cdot(s_t,1_\mathrm{H}))_{|t=0}=d/dt(ss_t,h)_{|t=0}\\
		&=((L_s)_*(e),0).
\end{align*}
This shows that, under the above identifications, $e_{|g}\in T_g\mathrm{G}\simeq(e_{|s},0)\in T_s\mathrm{S}\oplus T_h\mathrm{H}$, 
so the vector field $e$ on $\mathrm{G}$ projects down to the left-invariant vector field $e$ on $\mathrm{S}$ 
defined by left-translation on $\mathrm{S}$.

In our case, we have endowed the total space of the fibration $M\coloneqq\mathrm{G}$ with the metric induced by $\phi$. 
The conclusion is that the construction of Section \ref{s:first} induces on the base $B\coloneqq \mathrm{G}/\C^2\simeq\mathrm{Sol}_3$ 
precisely the metric for which $e_1,e_2,e_3$ is an orthonormal basis.

\begin{remark*}
In general, the curvature properties of the metric $g$ on $\mathcal{M}$ are hard to establish. 
In this example, the algebraic structure on $\mathcal{M}\cong \mathrm{Sol}_3$ allows us to see that  $\mathrm{Ric}_g = -2\, g$.
\end{remark*}

\begin{remark*}
The above shows that the vector fields $e_1,e_2,e_3$ on $\mathrm{G}$ are the lifts, in the sense of Section \ref{s:first}, 
of the corresponding vector fields on $\mathrm{S}$. 
In the language of Section \ref{s:example} we may say that the lifted vector fields are ``horizontal" and that the horizontal distribution is integrable.

In this context there are two natural ways to use $e\in\mathfrak{s}$ to deform a coassociative fibre $\Sigma$: 
either using the flow of the corresponding vector field on $\mathrm{G}$, i.e., moving each $(1_S,h)\simeq p\in\Sigma$ horizontally, 
or via the group action: $(1_\mathrm{S},h)\mapsto\exp(te)\cdot (1_\mathrm{S},h)=R_h(\exp(te))$. 
These two deformations coincide, but only up to reparametrization of $\Sigma$: indeed, the infinitesimal deformation 
\[
Z_{|p}\coloneqq \left.\frac{d}{dt}\right|_{t=0}(R_h(\exp(te))= \left.\frac{d}{dt}\right|_{t=0}(\exp(te), \exp(te)\cdot h)
\]
has non-trivial vertical components, but $(Z_{|p})^\perp=e_{|p}$. In any case, they define the same objects in the moduli space $\mathcal{M}$. 
The same reasoning shows that $R_h$ does not act by isometries on $\mathrm{G}$ and that it does not preserve horizontal vectors. 
In particular, it does not preserve the horizontal leaves of the submersion.

In this sense, the Riemannian submersion structure is not adapted to the group structure.
\end{remark*}

We shall now compute the mean curvature vector field $H$ of the fibres. 
By left-invariance, it suffices to do this for the fibre $\Sigma$, at the point $1_\mathrm{G}$. The second fundamental form 
$\mathrm{II}:T_{1_\mathrm{G}}\Sigma\times T_{1_\mathrm{G}}\Sigma\to T_{1_\mathrm{G}}\Sigma^\perp$ can be computed 
via the Weingarten formula 
\[
 \mathrm{II}(e_i,e_j)\cdot \nu = -  \nabla_{e_i}\nu \cdot e_j, 
\]
where $\nu\in (T_{1_\mathrm{G}}\Sigma)^\perp \cong \mathfrak{s}$ and $\nabla$ denotes the Levi-Civita connection of $g_\phi$.  
Using the Koszul formula, we can rewrite the previous identity as follows 
\[
 \mathrm{II}(e_i,e_j) \cdot \nu 	= -  \nabla_{e_i}\nu \cdot e_j 
								= \frac12\left( \nu \cdot [e_i,e_j] +  e_j \cdot [\nu,e_i] +  e_i \cdot [\nu,e_j]  \right).
\]
Since $\mathfrak{h}$ is abelian, the first summand on the right hand side is zero. Furthermore, for $i=j$ the formula simplifies to 
\begin{equation}\label{IIexB}
\mathrm{II}(e_i,e_i) \cdot \nu  =  e_i \cdot [\nu,e_i]. 
\end{equation}
Notice that this implies 
\[
H \cdot \nu	 	= \sum_{i=4}^7   e_i \cdot [\nu,e_i]  
				= \sum_{i=4}^7  \mathrm{ad}_\nu e_i \cdot e_i = \mathrm{tr}\left({\mathrm{ad}_{\nu}}_{|\mathfrak{h}}\right) 
				= \mathrm{tr}(\mu(\nu)).    
\]
The definition of $\mu$ then implies that $H=0$, as expected by Section \ref{s:first} and the fact that 
$\tau_{2|\Sigma}= 3\left(e^{45}-e^{67}\right) \in\Lambda^2_-(\Sigma)$. 

On the other hand, $\Sigma$ is not totally geodesic: the second fundamental form is identically zero if and only if $\mu(\nu)$ is a skew-symmetric 
endomorphism of $\mathfrak{h}$ with respect to ${g_\phi}_{|\mathfrak{h}}$, for every $\nu\in\mathfrak{s}$. 
In order to compare this example with Theorem \ref{thm:secvar}, we must thus compute 
the tensor $\gamma_Z\in\Lambda^2_-(\Sigma)$, introduced in Lemma \ref{lem:tau2}. One can check that it is given by
\[
\gamma_Z = \left((Z^1)^2+(Z^2)^2+(Z^3)^2\right) \left(e^{45}-e^{67}\right),
\]
for every $Z=Z^1e_1+Z^2e_2+Z^3e_3$.
In particular, one can check that the integrand in the second variation formula vanishes:
\[
(\iota_Zd\tau_2\wedge \iota_Z\phi+\tau_2\wedge\gamma_Z)_{|\Sigma} = 0. 
\]
This corresponds to the fact that all fibres are minimal, so their volume is constant. 
Bryant shows that the fibres are actually calibrated (with respect to an appropriate calibration), thus minimizing. 
We can confirm this by observing that $\iota_Z\tau_2\w d(\iota_Z\phi)_{|\Sigma}=0$. 
Plugging this into the second variation formula yields 
\[
\frac{d^2}{dt^2}\Vol(\Sigma_t)_{|t=0}=\int_\Sigma |\mathcal{C}_Z|^2\iota_0^*\vol_0\geq0,
\]  
for all possible variations. 

\smallskip

\noindent2. Let us now look for a coassociative fibration whose fibres are totally geodesic. 
A standard strategy (see, e.g., \cite{Joy}) is to find a non-trivial isometric involution $\sigma$ of $\mathrm{G}$ such that $\sigma^*\phi=-\phi$: 
each non-trivial connected component of its fixed point set is then a totally geodesic coassociative $4$-fold.

Consider, for example, the restriction to $\mathrm{G}$ of the complex conjugation in $\mathrm{SL}(3,\C)$. 
Let us denote it by $\sigma:\mathrm{G}\to\mathrm{G}$. Its fixed point set is the $4$-dimensional Lie subgroup 
\[
\mathrm{K}\coloneqq \mathrm{Fix}(\sigma) = \left\{
\begin{pmatrix}
\exp(t)	&	z		&	x\\
0		&	\exp(-t)	&	y\\
0		&	0		&	1
\end{pmatrix}
~|~ t,x,y,z\in\R
\right\} \subset \mathrm{G}. 
\]
The Lie algebra $\mathfrak{k}$ of $\mathrm{K}$ is spanned by the left-invariant vector fields $e_1, e_2, e_4,e_7$, and $\mathfrak{g}$ 
admits an $\mathrm{ad}(\mathfrak{k})$-invariant decomposition $\mathfrak{g} = \mathfrak{k} \oplus \mathfrak{m}$, where 
$\mathfrak{m}= \langle e_3,e_5,e_6\rangle$ is an $\mathrm{ad}(\mathfrak{k})$-invariant $3$-dimensional subspace. 
Notice that the differential $\sigma_*:\mathfrak{g}\to\mathfrak{g}$ of $\sigma$ at the identity $1_\mathrm{G}$ satisfies 
$\left.{\sigma_*}\right|_{\mathfrak{k}} = \mathrm{Id}_{\mathfrak{k}}$ and $\left.{\sigma_*}\right|_{\mathfrak{m}} = -\mathrm{Id}_{\mathfrak{m}}$. 
From this, we immediately see that $\sigma^*g_\phi= g_\phi$ and $\sigma^*\phi=-\phi$. 
Therefore, $\mathrm{K}$ is a totally geodesic coassociative $4$-fold of $(\mathrm{G},\phi)$. 
Left translation defines a coassociative fibration $\pi':\mathrm{G}\to\mathrm{G}/\mathrm{K}$ which is not a $g_\phi$-Riemannian submersion, since 
the restriction of $\left.g_\phi\right|_{1_\mathrm{G}}$ to $\mathfrak{m}$ is not $\mathrm{ad}(\mathfrak{k})$-invariant.
Also, the fibres of $\pi'$ cannot be compactified: $\mathrm{K}$ is not unimodular, and thus it does not admit any lattice. 

\smallskip

\noindent3. 
Our final example concerns a coassociative fibration whose fibres are not minimal: this is possible only in the non-calibrated case $d\psi\neq 0$. 

Consider the Abelian ideal $\mathfrak{a} = \langle e_2,e_3,e_6,e_7\rangle$ of $\mathfrak{g}$. Denote by $\mathrm{A}$ the 
unique connected normal Lie subgroup of $\mathrm{G}$ with Lie algebra $\mathfrak{a}$. Since $\phi_{|\mathrm{A}}=0$, 
$\mathrm{A}$ is a coassociative submanifold of $(\mathrm{G},\phi)$. 
Left translation defines a coassociative fibration $\pi'':\mathrm{G}\to\mathrm{G}/\mathrm{A}$. 

As usual, let us compute the mean curvature $H$ of the fibre at the identity of $\mathrm{G}$. 
Since $\mathfrak{a}$ is abelian, we can again use \eqref{IIexB}, 
with $\nu$ belonging to the subalgebra $\mathfrak{a}^\perp = \langle e_1,e_4,e_5 \rangle$ of $\mathfrak{g}$. 
We obtain
\[
\mathrm{II}(e_2,e_2) = e_1,\quad	
\mathrm{II}(e_3,e_3) = e_1,\quad
\mathrm{II}(e_6,e_6) = \frac12\,e_1,\quad	
\mathrm{II}(e_7,e_7) = \frac12\,e_1.
\]
Consequently, 
\[
H = \mathrm{II}(e_2,e_2) + \mathrm{II}(e_3,e_3) + \mathrm{II}(e_6,e_6) + \mathrm{II}(e_7,e_7) = 3\,e_1. 
\] 
Notice that $\tau_{2|\mathrm{A}} = -3\,e^{67} = -\frac32\left(e^{23}+e^{67}\right) -\frac32\left(-e^{23}+e^{67}\right)$. It follows that 
\[
\tau_{2|\mathrm{A}}^+ = -\frac32\left(e^{23}+e^{67}\right) = -\frac12\iota_H\phi_{|\mathrm{A}},
\]
as expected from Section \ref{s:first}.
\end{example}

\begin{example}A second example of ERP manifold is due to Lauret \cite{Lauret}.

Consider the Abelian Lie algebra $\mathfrak{a}$ spanned by all diagonal matrices of $\mathfrak{sl}(4,\R)$, 
and the Abelian Lie algebra $\R^4$. Let $\mathfrak{a}=\langle e_1,e_2,e_3\rangle$, $\R^4=\langle e_4,e_5,e_6,e_7\rangle$, and 
define the seven-dimensional Lie algebra $\mathfrak{g}\coloneqq\mathfrak{a}\ltimes_\mu\R^4$, where  
$\mu:\mathfrak{a}\rightarrow\mathrm{Der}(\R^4)\cong\mathrm{End}(\R^4)$ is given by
\[
\begin{array}{llll}
\mu(e_1)(e_4) = e_4,	& \mu(e_1)(e_5) = e_5,	& \mu(e_1)(e_6) = -e_6,	& \mu(e_1)(e_7) = -e_7,\\
\mu(e_2)(e_4) = e_4,	& \mu(e_2)(e_5) = -e_5,	& \mu(e_2)(e_6) = e_6,	& \mu(e_2)(e_7) = -e_7,\\
\mu(e_3)(e_4) = e_4,	& \mu(e_3)(e_5) = -e_5,	& \mu(e_3)(e_6) = -e_6,	& \mu(e_3)(e_7) = e_7. 
\end{array}
\]
The Lie algebra $\mathfrak{g}$ is solvable and unimodular. 
Its structure equations can be written with respect to the dual basis $(e^1,\ldots,e^7)$ of $(e_1,\ldots,e_7)$ as follows
\[
\left(0, 0, 0,  -e^{14}-e^{24}-e^{34}, -e^{15}+e^{25}+e^{35},  e^{16}-e^{26}+e^{36}, e^{17}+e^{27}-e^{37}   \right).
\]
Let $\mathrm{G}=\mathrm{A}\ltimes\mathbb{R}^4$ be the simply connected solvable Lie group with Lie algebra $\mathfrak{g}$. 
The left-invariant 3-form
\[
\phi = e^{123}+e^{1}\wedge(e^{45}+e^{67})+e^2\wedge(e^{46}-e^{57})-e^3\wedge(e^{47}+e^{56}), 
\]
defines an ERP structure on $\mathrm{G}$ with intrinsic torsion form 
\[
\tau_2 = -2\left(e^{45}-e^{67}\right)-2\left(e^{46}+e^{57}\right)+2\left(e^{47}-e^{56}\right). 
\]

By \cite{KathLauret}, the Lie group $\mathrm{G}$ admits a lattice $\Gamma\subset\mathrm{G}$, and thus one obtains  
a compact $7$-manifold $\Gamma\backslash\mathrm{G}$ endowed with an ERP structure.   
In detail, $\Gamma = \exp(A\Z+B\Z+C\Z) \ltimes \rho(\Z^4)$, where $(A,B,C)$ is a basis of $\mathfrak{a}$ such that $\exp(A)$, $\exp(B)$ and 
$\exp(C)$ leave invariant a lattice $\rho(\Z^4)$ of $\R^4$, for a suitable $\rho\in\mathrm{GL}(4,\R)$.

With the usual identification $\mathfrak{g}\cong T_{1_{\mathrm{G}}}\mathrm{G}$, 
the ideal $\R^4\subset \mathfrak{g}$ is the tangent space at the identity of a coassociative submanifold $\Sigma=\R^4$ 
of $\mathrm{G}$. Since $\tau_{2|\Sigma}\in\Lambda^2_-(\R^4)$, $\Sigma$ is minimal. 
Its second fundamental form at the identity can be computed as in Example \ref{BryEx}. In particular, we obtain 
\[
\begin{array}{ll}
\mathrm{II}(e_4,e_4) = e_1+e_2+e_3,	&	\mathrm{II}(e_5,e_5) = e_1-e_2-e_3,\\
\mathrm{II}(e_6,e_6) = -e_1+e_2-e_3,	&	\mathrm{II}(e_7,e_7) = -e_1-e_2+e_3.
\end{array}
\]
This shows that $\Sigma$ is not totally geodesic. It defines a left-invariant coassociative fibration with flat fibres that compactify to $T^4$. 
We can analyze its properties using exactly the same methods as for Bryant's example.
In particular, it is a Riemannian submersion and the fibres are volume minimizing. 
\end{example}

\begin{remark*}
Further examples of homogeneous spaces admitting invariant ERP structures are known. We refer the reader to \cite{Ball}  
for the complete classification. 
\end{remark*}

\section{Example: Riemannian submersions}\label{s:example}
Let $\Sigma$ be a minimal coassociative submanifold. 
The term $\iota_Zd\tau_{2|\Sigma}\wedge\iota_Z\phi_{|\Sigma}$ in the second variation formula contains a mix of directions, both tangent and orthogonal to $\Sigma$. 
The content of Proposition \ref{prop:Bh} can be seen as providing a reorganization of these contributions, separating the two directions. This reformulation is ideally suited to the context of coassociative fibrations mentioned in Section \ref{s:first}.

We shall assume that the fibration defines a Riemannian submersion $\pi:M\rightarrow B$, so that we have a Riemannian structure on $B$. 
As seen in Section \ref{s:first}, this is a strong condition.

We shall adopt the usual notation $T_pM=T_pM^{\ver}\oplus T_pM^{\hor}$ to describe the splitting defined by $d\pi$. 

Recall that a horizontal vector field $X$ on $M$ is called {\em basic} if, along each fibre, the projections $d\pi(X)$ are constant. 
Whenever an infinitesimal normal deformation $Z$ corresponds to a variation through fibres, it is a basic vector field. 

Let $X,Y$ denote basic vector fields and $V,W$ vertical vector fields. Notice that $\pi_*[X,V]=[\pi_*X,\pi_*V]=[\pi_*X,0]=0$, so $[X,V]=[X,V]^{\ver}$. 

We shall use the notation $e_1,e_2,e_3,e_4$ to denote an orthonormal basis of $T_p\Sigma=T_pM^{\ver}$ and $f_1,f_2,f_3$ to denote a orthonormal 
basis of $T_bB$, where $\pi(p)=b$. Locally, the corresponding basic vector fields define an orthonormal frame of $T\Sigma^\perp=TM^{\hor}_{|\Sigma}$. 

Following O'Neill \cite{ONeill}, set
$$T_VW\coloneqq(\nabla_VW)^{\hor}, \ \ A_XY\coloneqq(\nabla_X Y)^{\ver}.$$
Notice that $X\cdot f_j$ is constant along the fibres, so
\begin{align*}
|(\nabla_XV)^{\hor}|^2&=|(\nabla_VX)^{\hor}|^2=(\nabla_VX\cdot f_j)^2 =(\nabla_V(X\cdot f_j)-X\cdot\nabla_Vf_j)^2\\
&=(X\cdot\nabla_Vf_j)^2 =(X\cdot\nabla_{f_j}V)^2=(\nabla_{f_j}X\cdot V)^2\\
&=(A_{f_j}X\cdot V)^2,
\end{align*}
showing that $A$ controls such terms also. A similar calculation shows that $T$ controls terms of the form $(\nabla_VX)^{\ver}$.

Using these facts, O'Neill provides extensions of $T$, $A$ so that they define tensors on $M$. These two tensors exert strong control over the submersion. 
In particular, $T\equiv 0$ if and only if the fibres are totally geodesic and $A_XY=\frac{1}{2}[X,Y]{^{\ver}}$, 
so $A\equiv 0$ if and only if the horizontal distribution is integrable. 

\begin{ex*}
The Riemannian submersion corresponding to Bryant's ERP manifold, see Example \ref{BryEx}, has $A\equiv 0$.
\end{ex*}

The simplest way to construct a Riemannian submersion is via a free isometric action of a compact Lie group $\mathrm{G}$ on $M$: 
the orbits then define a Riemannian submersion $\pi:M\rightarrow B\coloneqq M/\mathrm{G}$. 
In our context, if we assume that the action preserves $\phi$ then it will also preserve the metric $g_\phi$. 
We shall further assume that the orbits are coassociative submanifolds. 

\begin{remark*}
This construction implies that $\mathrm{G}$ defines automorphisms of the hyper-K\"ahler structure on the fibres, discussed in Section \ref{s:first}. 
K3 surfaces have finite automorphism groups, so they cannot arise as fibres in this construction. The construction thus implies that the fibres are flat tori. 
\end{remark*}

In the above setting we can apply Theorem \ref{thm:secvar} to the fibres. Notice that: 
\begin{enumerate}[(i)]
\item The volume functional on the fibres defines a function $\V:B\rightarrow\R$, $\V(b)\coloneqq\Vol(\Sigma_b)$. 
\item Restricting to coassociative variations corresponds to choosing $Z$ to be a basic vector field on $M$. 
\item The integrand in the second variation formula is invariant under the group action. 
\end{enumerate}

Assume $b\in B$ corresponds to a totally geodesic fibre, so that $T=0$ at each point $p\in \Sigma_b$. 
We can then re-write the second variation formula as follows:
\begin{align*}
\frac{d^2}{dt^2}\V(b_t)_{|t=0}&=\int_{\Sigma_b}\iota_Z d\tau_2\wedge \iota_Z\phi\\
&=\int_{\Sigma_b}(4h(Z,Z)+2\tr(h_{|\Sigma_b})g(Z,Z))\vol\\
&=\left(4h(Z,Z)+2\tr(h_{|\Sigma_b})g(Z,Z)\right)\V(b),
\end{align*}
where $\i(h)=d\tau_2$. Notice: in this context the second variation formula corresponds to the Hessian of $\V$. The above expression shows that it 
depends on the induced bilinear symmetric form $h:{TB \times TB}\rightarrow\R$ and on the induced function $\tr(h_{|\Sigma}):B\rightarrow\R$.

Recall from equation \eqref{eq:dtau2} that $d\tau_2$ contains a term depending on $\star(\tau_2\wedge\tau_2)$ and a term $-\frac{1}{2}\i(\Ric)$. 
One might expect that, in specific situations, the first term can be explicitly calculated: this is the case, for instance, in the example discussed below. 
Here, we are interested in showing how the second term, which appears in the integrand in the form 
$$-\frac12\iota_Z\i(\Ric)\wedge\iota_Z\phi=-2\Ric(Z,Z)-|Z|^2\tr(\Ric_{|\Sigma}),$$
can be related to the curvature of $B$ using O'Neill's curvature formulae.

As usual, let $R$ denote the curvature of $M$. We shall use the convention $R(X,Y)Z\coloneqq\nabla_X\nabla_YZ-\nabla_Y\nabla_XZ-\nabla_{[X,Y]}Z$. 
Let $R^B$ denote the curvature of $B$. \cite{ONeill} (which uses the opposite sign convention for $R$) shows that
\begin{align*}
R(X,Y)Y \cdot X &= R^B(X,Y)Y\cdot X-3|A_XY|^2.
\end{align*}
It follows that
\begin{align*}
\Ric(Z,Z)	&=	R(e_i,Z)Z \cdot e_i+ R(f_j,Z)Z\cdot f_j\\
		&=	R(Z,e_i)e_i\cdot Z+R^B(f_j,Z)Z \cdot f_j -3|A_{f_j}Z|^2 \\
		&=	R(Z,e_i)e_i\cdot Z+\Ric^B(Z,Z) - 3 |A_{f_j}Z|^2.
\end{align*} 
We may also write 
\begin{align*}
\tr(\Ric_{|\Sigma_b})	&=	\Ric(e_i,e_i)
				=	R(e_k,e_i)e_i \cdot e_k + R(f_j,e_i)e_i \cdot f_j 
				=	R(f_j,e_i)e_i \cdot f_j,
\end{align*}
where we use the fact that $\Sigma$ is totally geodesic and Ricci-flat. We conclude that, at $b$,
\begin{align*}
-2\Ric(Z,Z)-|Z|^2\tr(\Ric_{|\Sigma})	&= -2\Ric^B(Z,Z) + 6 |A_{f_j}Z|^2\\
							&\quad -2 R(Z,e_i)e_i \cdot Z - |Z|^2 R(f_j,e_i)e_i \cdot f_j.
\end{align*}
This proves the following result.
\begin{cor}\label{cor:isometric_action}
Let $M$ be endowed with a closed $G_2$-structure $\phi$. 
Assume there exists a group action preserving $\phi$ whose orbits are compact and coassociative. 
Let $\V:B\rightarrow\R$ denote the volume function of the fibres of $\pi:M\to B=M/\mathrm{G}$.  
Assume a fibre $\Sigma_b$ is totally geodesic. Then, at $b$,
\begin{align*}\Hess(\V)(Z,Z)&=(2 k(Z,Z)+|Z|^2\tr(k_{|\Sigma_b})-2\Ric^B(Z,Z))\V(b)\\
&\quad +(6 |A_{f_j}Z|^2-2\,R(Z,e_i)e_i\cdot Z - |Z|^2\,R(f_j,e_i)e_i\cdot f_j)\V(b),
\end{align*}
where $k$ is the unique symmetric $2$-tensor such that  $\i(k)=\star(\tau_2\wedge\tau_2)$.
\end{cor}

\begin{remark*}
The curvature terms above, of the form $R(\hor,\ver)\ver\cdot\hor$, can be further analyzed using O'Neill's formula:
\[
R(X,V)V\cdot X=(\nabla_XT)_VV\cdot X + (\nabla_VA)_XX\cdot V - |T_VX|^2+|A_XV|^2.
\]
\end{remark*}

Recall that Bryant's example does not fit into this framework: the right action of $H$ does not preserve the metric.
A trivial example to which Corollary \ref{cor:isometric_action} does apply is given by the $T^4$-action on the flat $G_2$ manifold $\R^7/\Z^7$. 
In this case $\tau_2=0$. We shall now discuss a non-trivial example due to M.~Fern\'andez \cite{Fernandez}, with non-vanishing torsion.

\begin{example}\label{exFer}
Consider the $2$-step nilpotent matrix group 
\[
\mathrm{N} \coloneqq \left\{
\begin{pmatrix}
1	&	0	&	x^2	&	x^4	&	x^6\\
0	&	1	&	x^3	&	x^5	&	x^7\\
0	&	0	&	1	&	0	&	x^1\\
0	&	0	&	0	&	1	&	0\\
0	&	0	&	0	&	0	&	1
\end{pmatrix}
~|~ x^i\in\R
\right\} \subset \mathrm{GL}(5,\R).  
\]
As a Lie group, $\mathrm{N}\cong \mathrm{H}(1,2)\times\R^2$, where $\mathrm{H}(1,2)$ denotes the $5$-dimensional generalized Heisenberg group. 
A basis of left-invariant $1$-forms on $\mathrm{N}$ is given by 
\[
e^i 	\coloneqq dx^i,~i=1,2,3,4,5,\qquad 
e^6	\coloneqq dx^6-x^2dx^1,\qquad 
e^7	\coloneqq dx^7-x^3dx^1,
\]
so that the structure equations of the Lie algebra $\mathfrak{n}\cong \mathfrak{h}(1,2)\oplus\R^2$ of $\mathrm{N}$ are 
\[
\left(0,0,0,0,0,e^{12},e^{13}\right).
\]
Let $\mathcal{B} = (e_1,\ldots,e_7)$ denote the basis of $\mathfrak{n}$ with dual basis $\mathcal{B}^* = (e^1,\ldots,e^7)$. 
Then, $\mathfrak{h} = \langle e_1,e_2,e_3,e_6,e_7 \rangle$ is a $5$-dimensional ideal of $\mathfrak{n}$ isomorphic to $\mathfrak{h}(1,2)$,  
and we have $\mathfrak{n} = \mathfrak{h}\oplus \mathfrak{a}$, where $\mathfrak{a} = \langle e_4,e_5\rangle$ is a $2$-dimensional Abelian ideal. 

The left-invariant $3$-form on $\mathrm{N}$
\[
\phi = e^{123}+e^{1}\wedge(e^{45}+e^{67})+e^2\wedge(e^{46}-e^{57})-e^3\wedge(e^{47}+e^{56})
\]
defines a closed $G_2$ structure inducing the metric $g_\phi = \sum_{i=1}^7 e^i \otimes e^i$. Its intrinsic torsion form is
\[
\tau_2 = e^{27}-e^{36}. 
\]
Let $\mathrm{Z}$ denote the $4$-dimensional center of $\mathrm{N}$. Its Lie algebra is $\mathfrak{z} = \langle e_4,e_5,e_6,e_7 \rangle$ 
so $\mathrm{Z}$ is a coassociative submanifold of $\mathrm{N}$.

The matrix group $\mathrm{N}$ admits a lattice $\Gamma \coloneqq \mathrm{N} \cap \mathrm{GL}(5,\Z)$, and the left-invariant $3$-form $\phi$ 
descends to an invariant closed $G_2$-structure on the compact quotient $\Gamma \backslash \mathrm{N}$.  
By a result of Eberlein \cite[Prop.~5.5]{Eberlein}, $M\coloneqq \Gamma\backslash\mathrm{N}$ is the total space of a 
Riemannian submersion $\pi:M\to B$ with the following properties:
\begin{enumerate}[$\bullet$]
\item $\mathrm{Z}/(\Gamma\cap\mathrm{Z})$ is a $4$-torus isomorphic to $\mathrm{Iso}^0(M)$;
\item $\mathrm{Iso}^0(M)$ acts freely on $M$ and the orbits are flat, totally geodesic $4$-tori isometric to $\mathfrak{z} / (\log \Gamma \cap \mathfrak{z})$, 
$\log : \mathrm{N}\to\mathfrak{n}$ being the inverse of the Lie group exponential $\exp:\mathfrak{n}\to \mathrm{N}$. 
These orbits are the fibres of $\pi$;
\item the base $B$ is a flat $3$-torus given by the quotient of  $\mathfrak{m}\coloneqq \mathfrak{n}^\perp$, regarded as an additive Abelian group, 
by the lattice $\pi_{\mathfrak{m}}(\log(\Gamma))$, where $\pi_{\mathfrak{m}}:\mathfrak{n}\to\mathfrak{m}$ 
is the orthogonal projection onto $\mathfrak{m}$. 
\end{enumerate}
Since the $G_2$-structure $\phi$ on $\mathrm{N}$ is left-invariant, it follows that the $4$-torus preserves the invariant closed $G_2$-structure on $M,$ 
too. Moreover, the fibres of $\pi$ are coassociative submanifolds of $M.$ 

Since $d\tau_2=2e^{123}$, the restriction of $\iota_Zd\tau_2$ to the fibres vanishes identically for every normal variation $Z$. 
This shows that, if we restrict to variations in the moduli space, then $\Hess(\V)=0$, as expected. 
One can alternatively calculate each term in Corollary \ref{cor:isometric_action}, obtaining the same result. 

\begin{remark*}
If we look at general variations, the facts above and integration by parts allow us to re-write the second variation formula as follows:
$$\frac{d^2}{dt^2}(\Vol(\Sigma_t))_{|t=0}=\int_\Sigma  (\Delta\iota_Z\phi+2d(\iota_Z\tau_2)^+)\w\iota_Z\phi,$$
where all forms are restricted to $\Sigma$, $\Delta$ is the Laplacian on $\Sigma$ and $d(\iota_Z\tau_2)^+$ denotes the self-dual component.
 
This expression can be further modified, taking the form $\int_\Sigma(Q(Z),Z)\vol_0$ or $\int_{\Sigma}(Q(\iota_Z\phi),\iota_Z\phi)\vol_0$, for appropriate operators $Q$. This emphasizes their nature as quadratic forms on, equivalently, the space of normal vector fields or the space of self-dual 2-forms. However, differently from Bryant and Lauret's examples above, it remains unclear whether the fibres are stable. 
\end{remark*}

\begin{remark*}
In examples such as the one above, the vanishing of both tensors $A,T$ would be an exceedingly strong condition.

Indeed, let $M$ be endowed with a closed $G_2$ structure and a map $\pi:M\rightarrow B$ defining a Riemannian submersion. 
If $T\equiv 0$ then \cite{Hermann} shows that the lifting of geodesics from $B$ to $M$ generates isometries between different fibres. 
Restricting to geodesically convex neighbourhoods of $B$, one thus obtains identifications between the fibres. 
If also $A\equiv 0$ we can then build local identifications with the Riemannian product $M\simeq\Sigma\times B$ (see \cite{ONeill} for the global theory).

Assume the fibres are coassociative. 
Choose a local ON frame $e_1,e_2,e_3$ defined on some open subset $U\subseteq B$. The vector fields $e_i$ lift to define infinitesimal 
coassociative deformations of the fibres, so $\omega_i\coloneqq\iota_{e_i}\phi_{|\Sigma}$ are orthogonal harmonic self-dual 2-forms 
on the fibres. 
We may then write $\phi=123+1\wedge \omega_1+2\wedge\omega_2+3\wedge\omega_3$. 
Locally, 123 is a $3$-form on $B$, so $d(123)=0$. Using also $d\omega^i=0$, the condition $d\phi=0$ easily implies that $de_1=de_2=de_3=0$. 
This implies that the vector fields $e_1,e_2,e_3$ commute, so they define local ON coordinates on $B$. 
It then follows that $M$ is locally of the form $\Sigma\times \R^3$, so $\phi$ is torsion-free. 

$M$ is then Ricci-flat, so many terms in O'Neill's curvature formulae vanish.
\end{remark*}

\end{example}

\section{Example: perturbations}\label{s:perturbations}

The previous examples provide a good testing ground to verify our formulae, but their homogeneous structure implies that the fibres have constant volume. 
The first and second variations thus vanish automatically.

Homogeneity simplifies the construction of examples, but is not required by Theorem \ref{thm:secvar}. 
The theorem does however simplify when the initial submanifold is totally geodesic. 
We will now show how to produce an infinite set of examples of this type via perturbation of the homogeneous example seen in Section \ref{s:example}. 
The construction relies on an idea already used in Section \ref{s:erp}.

\smallskip

Let $\mathrm{N}$ be as in Example \ref{exFer}. Consider the map 
\[
c:\mathrm{N}\rightarrow\mathrm{N}, \qquad
\begin{pmatrix}
1	&	0	&	x^2	&	x^4	&	x^6\\
0	&	1	&	x^3	&	x^5	&	x^7\\
0	&	0	&	1	&	0	&	x^1\\
0	&	0	&	0	&	1	&	0\\
0	&	0	&	0	&	0	&	1
\end{pmatrix}\mapsto
\begin{pmatrix}
1	&	0	&	-x^2	&	x^4	&	x^6\\
0	&	1	&	-x^3	&	x^5	&	x^7\\
0	&	0	&	1	&	0	&	-x^1\\
0	&	0	&	0	&	1	&	0\\
0	&	0	&	0	&	0	&	1
\end{pmatrix}.
\]
One can easily check that $c$ is a group homomorphism preserving the lattice $\Gamma$. 
It thus induces a map on $M\coloneqq \Gamma\backslash\mathrm{N}$ which we will continue to denote by $c$. 
It is also simple to check that $c^*\phi=-\phi$ and that $c^2=\mathrm{Id}$. 
The $T^4$-fibre $\Sigma$ passing through $[1_{\mathrm{N}}]\in M$ coincides with the fixed point set of $c$. 
As in Example \ref{BryEx}, this confirms that $\Sigma$ is totally geodesic and coassociative.

Now choose any 2-form $\alpha$ on $M$. Set $\alpha' \coloneqq c^*\alpha-\alpha$ and $\phi' \coloneqq \phi+\epsilon d\alpha'$, for $\epsilon\in\R$. 
Then:
\begin{enumerate}[1.]
\item The condition of being a $G_2$ structure is open, so $\phi'$ is a closed $G_2$ structure for any sufficiently small $\epsilon$.
\item Clearly $c^*\phi'=-\phi'$, so the usual argument proves that the same fixed point set $\Sigma$ is coassociative also for $\phi'$ 
and totally geodesic for its induced metric.
\item The topology of $\Sigma$ has not changed, so the moduli space $\mathcal{M}'$ of $\phi'$-coassociative deformations of $\Sigma$ 
is again $3$-dimensional.
\end{enumerate}
The generic such $\alpha$ leads to a metric for which $\Sigma$ has non-trivial second variations. 

At this level of generality it is however unclear whether the new moduli space $\mathcal{M}'$ corresponds to a new coassociative fibration of $M$: 
the deformed submanifolds might now intersect each other. We can obtain further properties by restricting the class of 2-forms, as follows.

The fact that $c:M\rightarrow M$ maps fibres into fibres implies that it descends to a map $c:B\rightarrow B$. 
The two maps are related by the property $\pi\circ c=c\circ\pi$.

If we choose $\alpha$ to be a 2-form on $B$, we can set $\alpha' \coloneqq c^*\alpha-\alpha$ on $B$, 
then consider the $3$-form $\phi' \coloneqq \phi+\epsilon d(\pi^*(\alpha'))$ on $M$. 
Once again $c^*\phi'=-\phi'$, but now:
\begin{enumerate}[1.]
\item[4.]	
The perturbation term $d(\pi^*(\alpha'))$ is of the form $\lambda\, e^{123}$, for some $\lambda = \lambda(x^1,x^2,x^3).$ 
If $(e^1,\ldots,e^7)$ denotes the invariant coframe on $M$ induced by $\mathcal{B}^*$, we have
\[
\phi' = \left(1+\epsilon\lambda\right)e^{123}+e^{1}\wedge(e^{45}+e^{67})+e^2\wedge(e^{46}-e^{57})-e^3\wedge(e^{47}+e^{56}).
\] 
Linear algebra shows that 
\[
g_{\phi'} = \left(1+\epsilon\lambda\right)^{2/3}\sum_{i=1}^3 e^i\otimes e^i + \left(1+\epsilon\lambda\right)^{-1/3}\sum_{i=4}^7 e^i\otimes e^i. 
\]
Thus, $\phi$ and $\phi'$ induce homothetic metrics on any fibre.

\item[5.] Consider the $T^4$-action on $M$ described in Example \ref{exFer}. 
Given $g\in T^4$, $\pi\circ g=\pi$ so $g^*\pi^*=\pi^*$. It follows that $g^*(\pi^*(\alpha'))=\pi^*(\alpha')$. 
Since also $g^*\phi=\phi$, it follows that $g^*(\phi')=\phi'$. 
In particular, the action on $M$ is again isometric with respect to the new metric induced by $\phi'$.
\item[6.] Both $\phi$ and $d(\pi^*(\alpha'))$ vanish tangentially to the $T^4$-orbits, so these are coassociative with respect to $\phi'$. 
\end{enumerate} 

We conclude that the same fibration of $M$, already discussed in Example \ref{exFer}, is again a Riemannian submersion with respect to the new metric, 
and that its fibres are again flat and coassociative. As above, $\Sigma$ is totally geodesic so its first variations vanish, while its second variations are generally non-trivial.

\bibliographystyle{amsplain}
\bibliography{coassvolume_biblio}

\end{document}